\documentclass[reqno,12pt,a4paper]{article}
\usepackage{amscd,amssymb}
\usepackage{verbatim}  
\usepackage{tikz, graphicx}
\usepackage{a4wide}  
\usepackage{amsfonts,latexsym}
\usepackage{amsthm}
\usepackage{amsmath}

\usepackage[abs]{overpic}

\usepackage[numbers,sort&compress]{natbib}
\usepackage{url}
\usepackage[colorlinks=true,allcolors=blue]{hyperref}  % clickable DOI/URL

\usepackage[affil-it]{authblk}
\renewcommand{\baselinestretch}{1.05}
\theoremstyle{plain}
\newtheorem{theorem}{\bf Theorem}[section]
\newtheorem{lemma}[theorem]{\bf Lemma}

\newcommand{\R}{\mathbb{R}}

\newcommand{\Abs}[1]{\left\vert #1 \right\vert}
\newcommand{\abs}[1]{\vert #1 \vert}

\newcommand{\norm}[1]{\left\Vert #1 \right\Vert}

\DeclareMathOperator{\supp}{supp}

\title{A Regularized Shallow Water System} %\vskip 0.1in

\author{
Evgueni Dinvay\footnote{Department of Chemistry,
University of Troms\o , Norway.} \ and Henrik Kalisch\footnote{Department of Mathematics, University of Bergen, Norway.} 
}

\date{}
\begin{document}

%%%%%%%%%%%%%%%%%%%%%%%%%%%%%%%%%%%%%%%%%%%%%%%%%%%%%%%%%%%%%%%%%%%%%%%%%%%%%
\maketitle
%%%%%%%%%%%%%%%%%%%%%%%%%%%%%%%%%%%%%%%%%%%%%%%%%%%%%%%%%%%%%%%%%%%%%%%%%%%%%

%%%%%%%%%%%%%%%%%%%%%%%%%%%%%%%%%%%%%%%%%%%%%%%%%%%%%%%%%%%%%%%%%%%%%%%%%%%%%
\begin{abstract}
The shallow-water system is a standard model for long waves in shallow water. The system is hyperbolic and, for a large class of initial data, solutions develop steep gradients leading to shock formation in finite time. Since such singularities violate the long-wave assumptions underlying the model, their appearance limits the regime of validity of the equations.
In this work, we introduce a regularized shallow-water system in which the nonlinear terms are modified by a bounded operator. This regularization removes the standard derivative-steepening mechanism responsible for shock formation in the classical system while remaining consistent with the long-wave regime. We establish local well-posedness and small-data global well-posedness in Sobolev spaces that exclude singularity formation. In addition, numerical simulations indicate that the system admits solitary-wave solutions.
\end{abstract}
%%%%%%%%%%%%%%%%%%%%%%%%%%%%%%%%%%%%%%%%%%%%%%%%%%%%%%%%%%%%%%%%%%%%%%%%%%%%%

%%%%%%%%%%%%%%%%%%%%%%%%%%%%%%%%%%%%%%%%%%%%%%%%%%%%%%%%%%%%%%%%%%%%%%%%%%%%%
\section{Introduction}
%%%%%%%%%%%%%%%%%%%%%%%%%%%%%%%%%%%%%%%%%%%%%%%%%%%%%%%%%%%%%%%%%%%%%%%%%%%%%
\setcounter{equation}{0}
%%%%%%%%%%%%%%%%%%%%%%%%%%%%%%%%%%%%%%%%%%%%%%%%%%%%%%%%%%%%%%%%%%%%%%%%%%%%%
%
%
%
\noindent
The shallow-water system is one of the most widely known hyperbolic systems of 
partial differential equations \cite{Courant_Friedrichs1999}, 
and it is also one of the most popular systems 
for the modeling of long waves in shallow water \cite{Whitham}.
Numerical approaches to 
discretizing the shallow-water system form the basis of many popular coastal models used in coastal engineering practice and river flow modeling as well as academic research. 
 
As explained for example in \cite{Stoker}, the shallow-water equations are 
usually written in terms of the total flow depth $h(x,t)$ and the average 
horizontal velocity $u(x,t)$ as the unknowns, and it is assumed that 
the pressure is hydrostatic since the flow conditions are changing very slowly.
The most fundamental version of the shallow-water system concerns 
the case of long-crested waves propagating over a flat bed. 
Using the assumption of hydrostatic pressure, 
the equations of mass and momentum conservation take the form
\begin{equation}
\label{SWsystem_1}
\left.
\begin{array}{rcl}
h_t +  (h u )_x & = & 0, \\
(hu)_t + ( hu^2 + g h^2/2)_x          & = & 0,
\end{array}
\right\}
\end{equation}
\noindent
where $g$ denotes the gravitational acceleration.
%\noindent
For smooth solutions and with undisturbed depth $h_0$ and free-surface deflection
from rest denoted by $\eta(x,t)$, the system can be rewritten in the form
\begin{equation}
\label{SWsystem}
\left.
\begin{array}{rcl}
\eta_t + h_0 u_x  + (\eta u )_x & = & 0, \\
u_t + (u^2/2 + g \eta )_x          & = & 0.
\end{array}
\right\}
\end{equation}
As mentioned above, this system applies to long waves in a river or 
in shallow coastal areas, and it can be generalized to a two-dimensional 
version for waves in the coastal region with realistic bathymetry.
Such equations and even more general equations incorporating dispersion
and friction are widely used for numerical simulations of various aspects of
nearshore waves \cite{Bondehagen_Roeber_Kalisch_Buckley2024, Kalisch_Lagona_Roeber2024, Roeber2010}, and the discretizations
used for these equations are usually based on general numerical
methods for hyperbolic systems such as detailed in
%IntermodelComparison
\cite{LeVeque}, for example.

One of the problems with the system shown above is that
even though the derivation explicitly makes use of the assumption that
the waves to be described are long, solutions of this equation
often develop singularities in the form of infinite derivatives.
Indeed, it is shown formally in \cite{Whitham} that waves that
carry a decrease in elevation form derivative singularities. From a modeling point of view, these shocks can be interpreted as bores (see \cite{Ali_Kalisch2010,Favre1935,Whitham}), but from a mathematical point of view they can be a problem.

A rigorous mathematical proof can be given that solutions corresponding to compressive initial configurations develop unbounded gradients,
leading to shock formation (see \cite{Evans}, ch. 11.3).
On the other hand,
it is also known that global smooth solutions may persist
for certain non-compressive or monotone classes of initial data
\cite{Bressan2000, Lax1973},
so shock formation is not universal.

For times up to singularity formation, it can be shown mathematically
that the shallow-water equations arise as a limit point in the convergence 
of the more fundamental free-surface water-wave problem based on the 
full inviscid Euler equations to the shallow-water system
(given some appropriate assumptions on the regularity of the solutions)
\cite{Lannes}.
The problem is that solutions of \eqref{SWsystem} only exist for a short
time for a large class of initial data.
Indeed it can be shown with fairly elementary analysis methods
that solutions always blow up given some mild assumptions (see \cite{Evans}).
Unfortunately, once infinite derivatives and shocks appear, the
very assumption upon which the shallow-water system is based, is invalidated.
This problem is known as the {\it Long wave paradox} in some circles
\cite{LeMehaute,Ursell1953}.

%In the last 150 years a theory of conservation laws
%based on the tracking of such singularities has been developed
%(see Bressan, Courant). There is also an immense literature
%on numerical methods of conservation laws, and the exact description
%of shock waves is often taken as a litmus test for the accuracy
%of the method.

In the present note, it is our purpose to introduce a regularized
system which avoids the problem of singularity formation.
The system we put forward is
\begin{equation}
\label{regularizedSW}
	\left.
	\begin{array}{rcl}
		\eta_t + h_0 u_x  + K ( \eta u )_x
		& = &
		0
		, \\
		u_t + g \eta_x + K ( u^2/2 )_x
		& = &
		0
		,
	\end{array}
	\right\}
\end{equation}
where a regularizing operator can be chosen in
such a way that $K \partial_x$ is bounded in $L^2(\mathbb R)$.
At the same time one can make sure that $K \approx 1$
in the shallow water regime.
Indeed, as an example of such an operator,
the Fourier multiplier
\begin{equation}
\label{K_definition}
	K = \frac{ \tanh h_0D }{ h_0D }
\end{equation}
can be taken.
Here $D = -i\partial_x$,
and so at low frequencies the corresponding symbol $K(\xi) \approx 1$.
With this choice of $K$ we have 
$K \partial_x = i\tanh h_0D / h_0$ that is obviously
a bounded Fourier multiplier operator in $L^2(\mathbb R)$.
Moreover, this regularization naturally arises
in the Hamiltonian long wave approximation
of the full water wave problem \cite{Dinvay_Dutykh_Kalisch}, and
is similar to a Whitham-type system \cite{Aceves_Sanchez_Minzoni_Panayotaros, Moldabayev_Dutykh} although
the regularization is now in the nonlinear terms as opposed to the linear terms.
The total energy of System \eqref{regularizedSW} has the form
\begin{equation}
\label{Hamiltonian}
	\mathcal H(\eta, u)  = \frac 12 \int_\R
	\left(	
		g \eta K^{-1} \eta
		+ h_0 u K^{-1} u
		+ \eta u^2
	\right)
	dx
\end{equation}
where the domain of functional $\mathcal H$
is the Sobolev space $H^{1/2}(\mathbb R) \times H^{1/2}(\mathbb R)$.
Note that System \eqref{regularizedSW} has the following Hamiltonian structure
\begin{equation}
\label{Hamiltonian_structure}
    \eta_t
    =
    - K \partial_x
    \frac{\delta \mathcal H}{\delta u}
    \quad
    \text{ and }
    \quad
    u_t
    =
    - K \partial_x
    \frac{\delta \mathcal H}{\delta \eta}
\end{equation}
with the variational derivatives
\begin{equation}
\label{variational_derivatives}
    \frac{\delta \mathcal H}{\delta \eta}
    =
    g K^{-1} \eta
	+
    \frac 12 u^2
    \quad
    \text{ and }
    \quad
    \frac{\delta \mathcal H}{\delta u}
    =
    h_0 K^{-1} u
    +
    \eta u
    .
\end{equation}
In particular,
we have conservation of $\mathcal H$,
following from the skew-adjointness of $K \partial_x$,
since $K$ is self-adjoint and commutes with $\partial_x$.

We study the Cauchy problem for the semilinear regularization \eqref{regularizedSW}
of the shallow-water system.
The regularization modifies the nonlinear fluxes in such a way
that the derivative-steepening mechanism is suppressed.
As a consequence, we obtain local well-posedness
and global well-posedness for sufficiently small initial data in Sobolev spaces.
The large-data global dynamics is not addressed in this work.

Regularizations of shallow-water type systems that preserve conservative structure
have been proposed in several works.
In particular, a non-dispersive, non-dissipative regularization was introduced in
\cite{Clamond_Dutykh2018non_dispersive_conservative_regularization},
and its shock structure was further analyzed in \cite{Pu_Pego_Dutykh_Clamond2018}.
Extensions to uneven bottom topographies were considered in \cite{Clamond_Dutykh_Mitsotakis2019}.
These works address similar modeling questions from a different perspective
and provide useful context for the present approach.

%%%%%%%%%%%%%%%%%%%%%%%%%%%%%%%%%%%%%%%%%%%%%%%%%%%%%%%%%%%%%%%%%%%%%%%%%%%%%
\section{Local well-posedness}
%%%%%%%%%%%%%%%%%%%%%%%%%%%%%%%%%%%%%%%%%%%%%%%%%%%%%%%%%%%%%%%%%%%%%%%%%%%%%
\setcounter{equation}{0}
%%%%%%%%%%%%%%%%%%%%%%%%%%%%%%%%%%%%%%%%%%%%%%%%%%%%%%%%%%%%%%%%%%%%%%%%%%%%%
%
\noindent
In this section we work in the non-dimensional
settings $h_0 = 1$ and $g = 1$.
Introduce the space $X^s = H^s \times H^s$
equipped with the standard norm.
Denote by $X^s_T$ the space of continuous functions
defined on $[0, T]$ with values in $X^s$,
equipped with the supremum-norm.
%Define matrices
%%
%\[
%	\mathcal W
%	=
%	\frac 1{\sqrt 2}
%	\begin{pmatrix}
%		1 & 1
%		\\
%		W & -W
%	\end{pmatrix}
%	, \quad
%	\mathcal W^{-1}
%	=
%	\frac 1{\sqrt 2}
%	\begin{pmatrix}
%		1 & W^{-1}
%		\\
%		1 & -W^{-1}
%	\end{pmatrix}
%	.
%\]
%
%
Regard the unitary group
\[
	\mathcal S(t) =
	\frac 1{\sqrt 2}
	\begin{pmatrix}
		1 & 1
		\\
		1 & -1
	\end{pmatrix}
	\begin{pmatrix}
		e^{-itD} & 0
		\\
		0 & e^{itD}
	\end{pmatrix}
	\frac 1{\sqrt 2}
	\begin{pmatrix}
		1 & 1
		\\
		1 & -1
	\end{pmatrix}
\]
that is a composition of isometric operators in $X^s$.
Note that for any $s, t \in \mathbb R$,
$v \in X^s$ holds
\(
	\lVert \mathcal S(t)v \rVert_{X^s}
	=
	\lVert v \rVert_{X^s}
	,
\)
and consequently
\(
	\lVert \mathcal S(t)v \rVert_{X^s_T}
	=
	\lVert v \rVert_{X^s_T}
\)
for any $T>0$,
due to the fact that symbols of
eigenvalues of $\mathcal S(t)$ have absolute value
equal to one.
For any fixed $v_0 = (\eta_0, u_0)^T \in X^s$
function $\mathcal S(t)v_0$ solves the linear
initial-value problem associated with
\eqref{regularizedSW}.
Regard a mapping $\mathcal A : X^s_T \to X^s_T$
defined by
\begin{equation}
\label{contraction_mapping}
	\mathcal A(\eta, u) =
	\mathcal A(\eta, u; v_0)(t) = \mathcal S(t)v_0
	- \int_0^t \mathcal S(t - t') K \partial_x
	\begin{pmatrix}
		\eta u
		\\
		u^2 / 2
	\end{pmatrix}
	(t')dt'
	.
\end{equation}
Then the Cauchy problem for System \eqref{regularizedSW}
with the initial data $v_0$ may be rewritten
equivalently
as an equation in $X^s_T$ of the form
\begin{equation}
\label{u_is_Au}
	 v = \mathcal A(v; v_0)
\end{equation}
where $v = (\eta, u)^T \in X^s_T$.
Below the latter integral equation is solved locally in time
by making use of Picard iterations.
\begin{lemma}[local well-posedness]
\label{local_well_posedness_lemma}
	Let $s > 1/2$, $v_0 = (\eta_0, u_0)^T \in X^s$
	and $T = ( 7C_s \lVert v_0 \rVert_{X^s} )^{-1}$
	with some constant $C_s > 0$ depending only on $s$.
	Then there exists a unique solution
	$v = (\eta, u)^T \in X^s_T$ of Problem \eqref{u_is_Au}.

	Moreover, for any $R > 0$ there exists $T = T(R) > 0$
	such that the flow map associated with Equation \eqref{u_is_Au}
	is a real analytic mapping of the open ball
	$B_R(0) \subset X^s$ to $X^s_T$.
\end{lemma}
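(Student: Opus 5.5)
The plan is to prove that $\mathcal A(\cdot\,;v_0)$ is a contraction on a closed ball in $X^s_T$, using two facts. First, $\mathcal S(t)$ is an isometry on $X^s$. Second, $K\partial_x = i\tanh D$ is a Fourier multiplier whose symbol has modulus at most one, so
\[
\lVert K\partial_x f\rVert_{H^s}\le \lVert f\rVert_{H^s}.
\]
For $s>1/2$ the space $H^s(\mathbb R)$ is a Banach algebra: $\lVert fg\rVert_{H^s}\le c_s\lVert f\rVert_{H^s}\lVert g\rVert_{H^s}$. I would prove this, or cite it, via $\langle\xi\rangle^s\le 2^s(\langle\xi-\zeta\rangle^s+\langle\zeta\rangle^s)$ together with Young's inequality and $\langle\xi\rangle^{-s}\in L^2$.

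Combining these facts, with $N(v)=(\eta u,u^2/2)^T$ and $v=(\eta,u)^T$, $w=(\zeta,w_2)^T$, gives
\[
\lVert \mathcal A(v)-\mathcal S(\cdot)v_0\rVert_{X^s_T}\le T\sup_t\lVert N(v)\rVert_{X^s}\le C_sT\lVert v\rVert_{X^s_T}^2 .
\]
For differences I would use the bilinear structure: $\eta u-\zeta w_2=(\eta-\zeta)u+\zeta(u-w_2)$ and $u^2-w_2^2=(u-w_2)(u+w_2)$. This yields
\[
\lVert \mathcal A(v)-\mathcal A(w)\rVert_{X^s_T}\le C_sT\bigl(\lVert v\rVert_{X^s_T}+\lVert w\rVert_{X^s_T}\bigr)\lVert v-w\rVert_{X^s_T}.
\]
Continuity in time of the Duhamel term follows because the integrand lies in $C([0,T];X^s)$ and $\mathcal S$ is a strongly continuous group.

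Next I fix the ball and the time. Let $r=2\lVert v_0\rVert_{X^s}$ and restrict $\mathcal A$ to the closed ball $\overline{B}_r\subset X^s_T$. With $T=(7C_s\lVert v_0\rVert_{X^s})^{-1}$ we have $C_sTr=2/7$. The self-map bound is then $\lVert v_0\rVert+C_sTr^2=\lVert v_0\rVert+\tfrac47\lVert v_0\rVert\le r$. The Lipschitz constant is $2C_sTr=4/7<1$. Banach's fixed point theorem then gives existence and uniqueness in the ball.

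Uniqueness in the whole of $X^s_T$ needs a separate argument. Take two solutions $v,w$ on $[0,T]$; both are bounded by some $M$. The difference estimate on a short interval $[0,\tau]$ with $2C_s\tau M<1$ forces $v=w$ there. Iterating over consecutive intervals of length $\tau$ covers $[0,T]$. Alternatively, I would use a Gronwall argument on $\lVert v(t)-w(t)\rVert_{X^s}$.

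For analyticity, fix $R$ and take $T=(14C_sR)^{-1}$. Consider the map
\[
\Phi(v,v_0)=v-\mathcal A(v;v_0)\quad\text{on } X^s_T\times X^s .
\]
It is a continuous polynomial, linear in $v_0$ and quadratic in $v$, and hence real analytic. Its derivative in $v$ is $I-L$, where $\lVert L\rVert\le 2C_sT\lVert v\rVert_{X^s_T}<1$ on the ball of radius $2R$, so $I-L$ is invertible by a Neumann series. Uniqueness identifies the solution map $v_0\mapsto v$ with the fixed point in that ball. The analytic implicit function theorem then shows the flow map is real analytic from $B_R(0)$ to $X^s_T$. Alternatively, analyticity follows because the Picard iterates are polynomials in $v_0$ converging uniformly on $B_R$.

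The only genuinely nontrivial ingredient is the algebra property of $H^s$ for $s>1/2$; everything else is bookkeeping, and the boundedness of $K\partial_x$ removes any derivative loss. The main care point is the constant accounting that makes the factor $7$ work, together with the passage from uniqueness in the ball to uniqueness in $X^s_T$.
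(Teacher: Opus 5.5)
Your proposal is correct and follows the paper's proof. Existence and uniqueness come from a contraction argument for $\mathcal A$ on a ball of radius $2\lVert v_0\rVert_{X^s}$, using the algebra property of $H^s$, the isometry of $\mathcal S(t)$ and the boundedness of $K\partial_x=i\tanh D$; analyticity comes from the implicit function theorem for $v-\mathcal A(v;v_0)$, with the linearization inverted by a Neumann series. Your differences from the paper are minor: you centre the ball at $0$ rather than at $\mathcal S(t)v_0$, you use a direct sup-norm bound on the linearization, and you additionally supply uniqueness in all of $X^s_T$ and time-continuity of the Duhamel term, both of which the paper leaves implicit. One caveat: your alternative Picard-iterate route to analyticity would need complexification, since uniform limits of real polynomials on a real ball need not be real analytic, but the main implicit-function argument does not rely on it.
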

\begin{proof}
The proof idea is to show that the restriction of $\mathcal A$
on some closed ball $B_M$ centered at $\mathcal S(t)v_0$
is a contraction mapping.
The Sobolev space $H^s$ is an algebra for $s > 1/2$, and so
there is a positive constant $C_s > 0$
such that 
\[
	\lVert ( \eta u, u^2/2 ) \rVert _{X^s}
	\leqslant
	C_s \lVert ( \eta, u ) \rVert _{X^s}^2
\]
and
\[
	\lVert ( \eta_1 u_1 - \eta_2 u_2,
	u_1^2/2 - u_2^2/2 ) \rVert _{X^s}
	\leqslant
	C_s \lVert ( \eta_1 - \eta_2, u_1 - u_2 ) \rVert _{X^s}
	(
		\lVert ( \eta_1, u_1 ) \rVert _{X^s}
		+
		\lVert ( \eta_2, u_2 ) \rVert _{X^s}
	)
	.
\]

Thus for any $T, M > 0$ and $v, v_1, v_2 \in B_M \subset X^s_T$ hold
\[
	\lVert \mathcal A(v) - \mathcal S(t)v_0 \rVert _{X^s_T}
	\leqslant
	\int_0^T \lVert ( \eta u, u^2/2 ) \rVert _{X^s}
%	\leqslant
%	T \lVert ( \eta u, u^2/2 ) \rVert _{X^s_T}
	\leqslant
	C_sT \lVert v \rVert _{X^s_T}^2
	,
\]
\[
	\lVert \mathcal A(v_1) - \mathcal A(v_2) \rVert _{X^s_T}
	\leqslant
	C_sT \lVert v_1 - v_2 \rVert _{X^s_T}
	(
		\lVert v_1 \rVert _{X^s_T}
		+
		\lVert v_2 \rVert _{X^s_T}
	)
	,
\]
and so taking $M = 2\lVert v_0 \rVert _{X^s}$
and $T$ as in the formulation of the lemma
we conclude that $\mathcal A$ is a contraction in
the closed ball $B_M$.
The first statement of the lemma follows from the
contraction mapping principle.

We turn our attention to the smoothness of the flow map.
Let $R > 0$, $T = ( 7C_s R )^{-1}$ and
$B = B_R(0)$ be an open ball in $X^s$.
Define $\Lambda : B \times X^s_T \to X^s_T$ as
\[
	\Lambda(v_0, v) = v - \mathcal A(v; v_0)
\]
that is obviously a smooth map.
Its Fr\'echet derivative with respect to the second variable
is defined by
\[
	d_v\Lambda(v_0, v)h =
	h + \int_0^t \mathcal S(t - t') K \partial_x
	\begin{pmatrix}
		u & \eta
		\\
		0 & u
	\end{pmatrix}
	h(t')dt'
\]
where $v = (\eta, u)^T$ and $h \in X^s_T$.
If $v_1 \in X^s_T$ is the solution of Problem \eqref{u_is_Au}
corresponding to the initial data $v_0 \in B$ then
$\Lambda(v_0, v_1) = 0$.
Moreover, it satisfies the following estimate
\[
	\lVert v_1(t) \rVert_{X^s}
	\leqslant
	\lVert v_0 \rVert_{X^s} +
	C_s \int_0^t \lVert v_1(t') \rVert_{X^s}^2 dt'
\]
and so
\[
	\int_0^t \lVert v_1(t') \rVert_{X^s}^2 dt'
	\leqslant
	\frac{ t \lVert v_0 \rVert_{X^s}^2 }
	{1 - C_st \lVert v_0 \rVert_{X^s}}
\]
for any $t$.
The latter is used to estimate operator
$ I - d_v\Lambda(v_0, v_1) $ as follows
\begin{multline*}
	\lVert h - d_v\Lambda(v_0, v_1)h \rVert
	\leqslant
	C_s \sup _{t \in [0, T]} \int_0^t
	\lVert v_1(t') \rVert_{X^s}
	\lVert h(t') \rVert_{X^s} dt'
	\\
	\leqslant
	C_s \sup _{t \in [0, T]}
	\left(
		t \int_0^t \lVert v_1(t') \rVert _{X^s}^2 dt'
	\right) ^{1/2}
	\lVert h \rVert_{X^s_T} 
	\leqslant
	\frac{ C_sT \lVert v_0 \rVert_{X^s} }
	{ \sqrt{ 1 - C_sT \lVert v_0 \rVert_{X^s} } }
	\lVert h \rVert_{X^s_T} 
	\leqslant
	\frac 1
	{ \sqrt{ 42 } }
	\lVert h \rVert_{X^s_T} 
\end{multline*}
which is true for any $h \in X^s_T$.
As a result operator $d_v\Lambda(v_0, v_1)$ is invertible
and so the second assertion of the lemma follows from
the Implicit Function Theorem.
\end{proof}

\begin{lemma}
\label{classical_solution_lemma}
    %Let $s > 3/2$.
    %Then
    The solution $v(t)$ described in Lemma \ref{local_well_posedness_lemma}
    is classical,
    that is,
    \[
        v \in C^1 \left( (0, T); H^{s - 1}(\R) \times H^{s - 1}(\R) \right)
    \]
    and it satisfies \eqref{regularizedSW}.
\end{lemma}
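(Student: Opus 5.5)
We will differentiate the Duhamel formula \eqref{contraction_mapping} directly in time, using that the solution $v\in X^s_T$ from Lemma \ref{local_well_posedness_lemma} is continuous in time with values in $X^s$. Write $N(v)=K\partial_x(\eta u, u^2/2)^T$. Since $H^s$ is an algebra for $s>1/2$ and $K\partial_x$ is bounded on every $H^s$ (its symbol is $i\tanh\xi$), the map $t\mapsto N(v(t))$ is continuous from $[0,T]$ into $X^s$. The bilinear estimate from the proof of Lemma \ref{local_well_posedness_lemma} gives
\[
\lVert N(v(t))-N(v(t_0))\rVert_{X^s}\leqslant C_s\lVert v(t)-v(t_0)\rVert_{X^s}\bigl(\lVert v(t)\rVert_{X^s}+\lVert v(t_0)\rVert_{X^s}\bigr),
\]
which yields this continuity.

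Next we treat the linear part. The group $\mathcal S(t)$ is generated by $-L$, where $L$ is the linear operator with $L(\eta,u)^T=(u_x,\eta_x)^T$; indeed, diagonalizing by the constant matrix shows the generator is $\mathrm{diag}(-iD, iD)$ conjugated. Since $L$ maps $X^s$ boundedly into $X^{s-1}$, for $w\in X^s$ the map $t\mapsto\mathcal S(t)w$ is $C^1$ into $X^{s-1}$ with derivative $-L\mathcal S(t)w$. On each Fourier mode this follows from $|e^{\pm ih\xi}-1|/|h|\leqslant|\xi|$ and dominated convergence, which is the standard fact that $\mathcal S$ is a strongly continuous group on $X^{s-1}$ whose generator has domain containing $X^s$. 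Applied to $w=v_0$, this shows $\mathcal S(t)v_0\in C^1(\mathbb R;X^{s-1})$.

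The main step is the Duhamel integral $I(t)=\int_0^t\mathcal S(t-t')F(t')\,dt'$, where $F=N(v)\in C([0,T];X^s)$. We write $I(t)=\mathcal S(t)\int_0^t\mathcal S(-t')F(t')\,dt'$. The integrand $\mathcal S(-t')F(t')$ lies in $C([0,T];X^s)$, because $\mathcal S$ is an isometric strongly continuous group on $X^s$ and $F$ is continuous. Hence $J(t)=\int_0^t\mathcal S(-t')F(t')\,dt'$ is $C^1$ into $X^s$, with $J'(t)=\mathcal S(-t)F(t)$. The product rule for $t\mapsto\mathcal S(t)J(t)$ in $X^{s-1}$ then gives
\[
I'(t)=-L I(t)+F(t).
\]
We justify this rule by splitting the difference quotient as
\[
\frac{\mathcal S(t+h)-\mathcal S(t)}{h}J(t+h)+\mathcal S(t)\frac{J(t+h)-J(t)}{h}.
\]
The first term converges because the difference quotients of $\mathcal S$ are uniformly bounded from $X^s$ to $X^{s-1}$ and converge strongly, while $J(t+h)\to J(t)$ in $X^s$. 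The second term converges by the $C^1$ property of $J$. The resulting derivative is continuous into $X^{s-1}$, and we expect this uniform-versus-strong convergence argument to be the only delicate point.

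Combining the two parts gives $v\in C^1((0,T);X^{s-1})$ (in fact on the closed interval) with
\[
v_t=-Lv-N(v),
\]
which is exactly \eqref{regularizedSW} with $h_0=g=1$. Here the equation holds in $X^{s-1}$, and in particular in the sense of distributions.
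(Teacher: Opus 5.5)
Your proof is correct and is essentially the paper's argument. Both differentiate the Duhamel formula directly, using that the difference quotients $(\mathcal S(h)-I)/h$ are uniformly bounded from $X^s$ to $X^{s-1}$ and converge strongly there to the generator (by dominated convergence on the Fourier side), together with continuity of $t\mapsto K\partial_x(\eta u,u^2/2)^T$ in $X^s$. The only difference is bookkeeping: you factor the Duhamel term as $\mathcal S(t)J(t)$ and apply a product rule, whereas the paper restarts the integral equation at time $t$ and splits the difference quotient into the four terms $\mathfrak g_1,\dots,\mathfrak g_4$. Your version has the small bonus of making continuity of the derivative, and one-sided differentiability at $t=0$ and $t=T$, explicit.
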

\begin{proof}
We denote by
\begin{equation}
\label{A_and_f_definition}
    A
    =
    \begin{pmatrix}
        0
        &
        -\partial_x
        \\
        -\partial_x
        &
        0
    \end{pmatrix}
    \quad
    \text{ and }
    \quad
    f(v)
    =
    f(\eta, u)
    =
	-
    K \partial_x
	\begin{pmatrix}
		\eta u
		\\
		u^2 / 2
	\end{pmatrix}
    .
\end{equation}
It is enough to demonstrate that
\begin{equation*}
    \norm{
        \frac{v(t + h) - v(t)}h
        -
        Av(t)
        -
        f(v(t))
    }_{X^{s - 1}}
    \to 0
\end{equation*}
as $h \to 0$ for any $t \in (0, T)$.
Taking into account that $v$ satisfies the integral equation \eqref{u_is_Au},
one can regroup the functions staying under the norm as
\begin{multline*}
    \frac{v(t + h) - v(t)}h
    -
    Av(t)
    -
    f(v(t))
    =
    \underbrace{
        \left(
            \frac{ \mathcal S(h) - I }h - A
        \right)
        v(t)
    }_{
        = \mathfrak g_1(h)
    }
    +
    \underbrace{
        ( \mathcal S(h) - I )
        f(v(t))
    }_{
        = \mathfrak g_2(h)
    }
    \\
    +
    \mathcal S(h)
    \left(
        \underbrace{
            \frac 1h \int_t^{t + h}
            \mathcal S(t - r) ( f(v(r)) - f(v(t)) )
            dr
        }_{
            = \mathfrak g_3(h)
        }
        +
        \underbrace{
            \frac 1h \int_t^{t + h}
            \mathcal ( S(t - r) - I ) f(v(t))
            dr
        }_{
            = \mathfrak g_4(h)
        }
    \right)
    .
\end{multline*}
The first term $\mathfrak g_1(h)$ has the squared norm
\[
    \norm{ \mathfrak g_1(h) }_{X^{s- 1}}^2
    =
    \frac 12 \sum_{m = \pm 1}
    \int
    \Abs{ \widehat{\eta}(\xi) + m \widehat{u}(\xi) }^2
    \frac{ \Abs{ e^{ - m i h\xi} - 1 + m i h\xi }^2 }{ |h|^2 }
    \left( 1 + \xi^2 \right)^{s - 1}
    d \xi
\]
which tends to zero by the dominated convergence theorem,
due to
\[
    \frac{ \Abs{ e^{ \pm i h\xi} - 1 \mp i h\xi } }{ |h| }
    \to 0
    \quad
    \text{ and }
    \quad
    \frac{ \Abs{ e^{ \pm i h\xi} - 1 \mp i h\xi }^2 }{ |h|^2 }
    \left( 1 + \xi^2 \right)^{s - 1}
%    \leqslant
%    4 \xi^2
%    \left( 1 + \xi^2 \right)^{s - 1}
    \leqslant
    4
    \left( 1 + \xi^2 \right)^s
    ,
\]
since
\(
    \eta, u \in H^s(\R)
    .
\)
The second term $\mathfrak g_2(h)$ tends to zero
by the continuity of the semigroup $\mathcal S$ in $X^{s - 1}$,
which also follows from the dominated convergence theorem by the similar argument.
Next,
\[
    \norm{ \mathfrak g_3(h) }_{X^{s- 1}}
    \leqslant
	2 C_s \lVert v \rVert _{X^s_T}
    \Abs{
        \frac 1h
        \int_t^{t + h} \lVert v(r) - v(t) \rVert _{X^s} dr
    }
    \to 0
\]
and
\[
    \norm{ \mathfrak g_4(h) }_{X^{s- 1}}
    \leqslant
    \Abs{
        \frac 1h
        \int_t^{t + h} \lVert \mathcal ( S(t - r) - I ) f(v(t)) \rVert _{X^{s - 1}} dr
    }
    \to 0
\]
by continuity of the integrands.
\end{proof}

We turn our attention to the conservation of $\mathcal H(v(t))$
with the Hamiltonian $\mathcal H(v)$ given in \eqref{Hamiltonian}.
First, we notice that is it well defined for $v \in X^{1/2}$.
Indeed, on the one hand we have an equivalence of the energy norm
\begin{equation}
\label{energy_norm}
    \norm{v}_{\mathcal H}
    =
    \frac 1{\sqrt 2}
    \norm{K^{-1/2} v}_{L^2}
\end{equation}
and $X^{1/2}$-norm,
with
\(
    \norm{v}_{\mathcal H}^2
\)
constituting the quadratic integral in \eqref{Hamiltonian}.
On the other hand the cubic integral
in \eqref{Hamiltonian} is bounded as
\[
    \Abs{
        \int \eta u^2
    }
    \leqslant
    \norm{\eta}_{L^2}
    \norm{u}_{L^4}^2
    \leqslant
    C
    \norm{\eta}_{L^2}
    \norm{u}_{H^{1/4}}^2
    \leqslant
    C
    \norm{v}_{X^{1/2}}^3
\]
by the Sobolev embedding $H^{1/4}(\R) \hookrightarrow L^4(\R)$.
As a matter of fact,
$\mathcal H$ is continuously differentiable on $X^{1/2}$.
Thus taking into account the Hamiltonian structure \eqref{Hamiltonian_structure}
and the result of Lemma \ref{classical_solution_lemma},
one can easily verify the energy conservation by the chain rule for $s \geqslant 3/2$.
An extension of the energy conservation to $s \in (1/2, 3/2)$ is achieved
by the regularization argument,
the most technical part of the proof of the next lemma.

\begin{lemma}
The energy functional $\mathcal H$ is continuously differentiable on $X^{1/2}$.
Moreover,
\[
    \mathcal H(v(t))
    =
    \mathcal H(v_0)
    , \quad
    t \in [0, T],
\]
where $v, v_0$ and $T$ are described in Lemma \ref{local_well_posedness_lemma}.
\end{lemma}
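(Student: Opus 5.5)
The plan has two parts: first show that $\mathcal H\in C^1(X^{1/2})$, and then prove conservation for smooth data and pass to the limit using the analytic dependence on data from Lemma \ref{local_well_posedness_lemma}.

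\emph{Differentiability.} Write $\mathcal H=\mathcal H_2+\mathcal H_3$, with $\mathcal H_2(v)=\norm{v}_{\mathcal H}^2$ and $\mathcal H_3(v)=\frac12\int\eta u^2$. The symbol $K(\xi)^{-1}=\xi/\tanh\xi$ is comparable to $(1+\xi^2)^{1/2}$. Hence $\mathcal H_2$ is a bounded quadratic form on $X^{1/2}$, so it is smooth, with derivative $h\mapsto\int(\eta_h K^{-1}\eta+u_hK^{-1}u)$. The term $\mathcal H_3$ is the restriction to the diagonal of the bounded trilinear form $(a,b,c)\mapsto\frac12\int abc$. This form is bounded on $(H^{1/2})^3$ by H\"older's inequality $L^2\times L^4\times L^4$ and the embedding $H^{1/4}\hookrightarrow L^4$ used above. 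A bounded multilinear form is smooth, so $\mathcal H\in C^1$. Its derivative is
\[
d\mathcal H(v)h=\int\Big(\eta_h\big(K^{-1}\eta+\tfrac12u^2\big)+u_h\big(K^{-1}u+\eta u\big)\Big)dx .
\]
Here $\frac12u^2,\eta u\in L^2$ pair with $h\in X^{1/2}\subset L^2$. This is the pairing of $h$ with the variational derivatives \eqref{variational_derivatives}, which lie in $X^{-1/2}$.

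\emph{Smooth data.} Take $v_0\in X^{s'}$ with $s'\ge 3/2$. By Lemma \ref{local_well_posedness_lemma} in $X^{s'}$ and Lemma \ref{classical_solution_lemma}, $v\in C^1((0,T');X^{s'-1})\subset C^1((0,T');X^{1/2})$. The chain rule then gives
\[
\frac{d}{dt}\mathcal H(v)=d\mathcal H(v)v_t .
\]
Set $\Phi=\frac{\delta\mathcal H}{\delta\eta}$ and $\Psi=\frac{\delta\mathcal H}{\delta u}$; both lie in $H^{s'-1}\subset L^2$ because $K^{-1}$ has order one. By \eqref{Hamiltonian_structure}, $\eta_t=-K\partial_x\Psi$ and $u_t=-K\partial_x\Phi$. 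Therefore
\[
d\mathcal H(v)v_t=-\int\big(\Phi K\partial_x\Psi+\Psi K\partial_x\Phi\big)=0 ,
\]
since $K\partial_x$ is bounded and skew-adjoint on $L^2$. So $\mathcal H(v(t))$ is constant on $(0,T')$, and by continuity it is constant on $[0,T']$.

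\emph{Density argument (main obstacle).} Now let $s\in(1/2,3/2)$ and $v_0\in X^s$. Approximate $v_0$ by $v_0^n=\chi(D/n)v_0\in X^\infty$, where $\chi$ is a smooth frequency cut-off. Then $v_0^n\to v_0$ in $X^s$ and $\norm{v_0^n}_{X^s}\le\norm{v_0}_{X^s}$.

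The delicate point is that the existence time must not shrink, in two respects. First, the time $T=(7C_s\norm{v_0}_{X^s})^{-1}$ is uniform for the $X^s$ solutions $v^n$. Second, one must ensure that the $X^{3/2}$ solution with data $v_0^n$, which exists a priori only on a shorter time determined by its $X^{3/2}$ norm, actually coincides with $v^n$ on all of $[0,T]$. For this I would use persistence of regularity. The Kato--Ponce/algebra estimate
\[
\norm{fg}_{H^{3/2}}\lesssim\norm f_{H^{3/2}}\norm g_{L^\infty}+\norm f_{L^\infty}\norm g_{H^{3/2}}
\]
combined with the integral equation gives
\[
\norm{v^n(t)}_{X^{3/2}}\le\norm{v_0^n}_{X^{3/2}}+C\int_0^t\norm{v^n}_{X^s}\norm{v^n}_{X^{3/2}}\,dt' .
\]
Since $\norm{v^n}_{X^s}$ is bounded on $[0,T]$, Gr\"onwall's inequality keeps the $X^{3/2}$ norm finite there. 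Uniqueness in $X^s$ then identifies the two solutions. Hence $\mathcal H(v^n(t))=\mathcal H(v_0^n)$ on $[0,T]$.

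Continuity of the flow map $B_R(0)\to X^s_T$ from Lemma \ref{local_well_posedness_lemma} gives $v^n\to v$ in $X^s_T\subset C([0,T];X^{1/2})$. Continuity of $\mathcal H$ on $X^{1/2}$ then lets us pass to the limit on both sides, which yields $\mathcal H(v(t))=\mathcal H(v_0)$.
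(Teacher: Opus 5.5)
Your argument is correct, but it reaches conservation by a genuinely different route from the paper.

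\textbf{The paper's route.} The paper regularizes the \emph{equation} rather than the data. It replaces $A$ by $AP_\mu$, so that \eqref{frequency_truncatedSW} is an ODE in $X^s$ for the same $s$. Its solutions are $C^1$ in $X^s$ on the same $[0,T]$, and the modified energy $\mathcal H_\mu$ is conserved by the chain rule at regularity $s$. No higher regularity and no persistence argument are needed. The price is that $\mathcal S_\mu(t)\to\mathcal S(t)$ only strongly. The paper therefore gets $v_\mu\to v$ only in $L^2(0,T;X^s)$, along a subsequence, through an induction over subintervals with dominated convergence. It deduces $\mathcal H(v(t))=\mathcal H(v_0)$ for a.e.\ $t$ and concludes by continuity.

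\textbf{Your route.} You keep the exact equation and smooth the data. Your smooth-data step is the direct chain-rule computation that the paper only mentions for $s\geqslant 3/2$. You take the limit in $C([0,T];X^s)$, so no subsequences or a.e.\ statements arise. What you pay is persistence of $X^{3/2}$ regularity over the whole $X^s$ lifespan, via the Kato--Ponce estimate (the same one behind Lemma \ref{persistence_lemma}). You also need a routine continuation of the $X^{3/2}$ solution in steps of uniform length, gluing conservation across steps by continuity. Your multilinear-form proof of $\mathcal H\in C^1(X^{1/2})$ is a tidier version of the paper's gradient estimate.

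\textbf{One technicality.} The flow-map statement in Lemma \ref{local_well_posedness_lemma} is on an open ball $B_R(0)$ with time $T(R)=(7C_sR)^{-1}$. Since $R>\norm{v_0}_{X^s}$ forces $T(R)<T$, quoting it literally gives $v^n\to v$ only on a slightly shorter interval. Estimate the Duhamel difference directly instead. Both $v^n$ and $v$ are bounded by $3\norm{v_0}_{X^s}$ on $[0,T]$, so Gr\"onwall gives $\norm{v^n-v}_{X^s_T}\leqslant e^{6/7}\norm{v_0^n-v_0}_{X^s}$.
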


\begin{proof}
Linearisation of $\mathcal H$ around $v \in X^{1/2}$
gives
\[
    d \mathcal H(v) \delta v
    =
	\int_\R
	\left(
		\eta K^{-1} \delta \eta
		+
        u K^{-1} \delta u
		+
        \frac 12 \delta \eta u^2
        +
        \eta u \delta u
	\right)
	dx
    , \quad
    \delta v = ( \delta \eta, \delta u )^T \in X^{1/2}
    ,
\]
which defines a bounded linear operator $d \mathcal H(v)$,
due to
\begin{multline*}
    \Abs{d \mathcal H(v) \delta v}
    \leqslant
    \norm{
		K^{-1/2} \eta
    }_{L^2}
    \norm{
        K^{-1/2} \delta \eta
    }_{L^2}
    +
    \norm{
        K^{-1/2} u
    }_{L^2}
    \norm{
        K^{-1/2} \delta u
    }_{L^2}
    \\
    +
    \frac 12
    \norm{
        \delta \eta
    }_{L^2}
    \norm{
        u
    }_{L^4}^2
    +
    \norm{
        \eta
    }_{L^4}
    \norm{
        u
    }_{L^4}
    \norm{
        \delta u
    }_{L^2}
    \lesssim
    \left(
        1 + \norm{v}_{X^{1/2}}^2
    \right)
    \norm{\delta v}_{X^{1/2}}
    ,
\end{multline*}
and so $\mathcal H$ is differentiable.
Its gradient with respect to the energy inner product, inducing the norm \eqref{energy_norm},
has the form
\[
    \nabla \mathcal H(v)
    =
    2 K
    \frac{ \delta \mathcal H }{ \delta v }
    =
    \left(
        2 K
        \frac{ \delta \mathcal H }{ \delta \eta }
        ,
        2 K
        \frac{ \delta \mathcal H }{ \delta u }
    \right)^T
    =
    \begin{pmatrix}
        2 \eta + K \left( u^2 \right)
        \\
        2 u + 2 K \left( \eta u \right)
    \end{pmatrix}
    ,
\]
where the variational derivatives are defined in \eqref{variational_derivatives}.
Continuity of the derivative $d \mathcal H$ is equivalent to
the continuity of the  gradient $\nabla \mathcal H$.
The latter follows from the estimate
\begin{multline*}
    \norm{ \nabla \mathcal H(v_1) - \nabla \mathcal H(v_2) }_{\mathcal H}
    \leqslant
    2
    \norm{ v_1 - v_2 }_{\mathcal H}
    +
    \frac 1{\sqrt 2}
    \norm{
        \begin{pmatrix}
            u_1^2 - u_2^2
            \\
            2 \eta_1 u_1 - 2 \eta_2 u_2
        \end{pmatrix}
    }_{L^2}
    \\
    \lesssim
    \left(
        1
        +
        \norm{ v_1 }_{\mathcal H}
        +
        \norm{ v_2 }_{\mathcal H}
    \right)
    \norm{ v_1 - v_2 }_{\mathcal H}
    ,
\end{multline*}
where we have used the boundedness of $K^{1/2}$ in $L^2(\R)$,
H\"older's inequality
\(
    \norm{fg}_{L^2}
    \leqslant
    \norm{f}_{L^4}
    \norm{g}_{L^4}
\)
and the Sobolev embedding $H^{1/4}(\R) \hookrightarrow L^4(\R)$.
This completes the proof of smoothness of $\mathcal H$.

In order to prove the energy conservation,
we introduce a regularized version  of System \eqref{regularizedSW} as follows.
For $\mu \geqslant 1$ let $P_\mu$
be the Fourier multiplier with an even smooth symbol $\theta_\mu(\xi) = \theta(\xi/\mu)$,
that is, for $f \in \mathcal S'(\R)$,
\[
    \widehat{P_\mu f}(\xi) = \theta\left( \frac{\xi}{\mu} \right) \widehat f(\xi)
    .
\]
We assume that $\supp\theta \subset [-2,2]$,
hence $\widehat{P_\mu f}$ is supported in $[-2\mu,2\mu]$, so $P_\mu f(x)$ is smooth.
We also assume that $\abs{\theta} \leqslant 1$.
Then $P_\mu$ is bounded in $H^s(\R)$ for any $s$, with operator norm $\leqslant 1$. 
Finally, we assume that $\theta = 1$ on $[-1, 1]$,
so that $P_\mu$ converges strongly to the identity operator
in $H^s(\R)$ as $\mu \to \infty$.

Now consider the following frequency-truncated version
\begin{equation}
\label{frequency_truncatedSW}
    \partial_t v = A P_{\mu} v + f(v)
    , \quad
    v(0) = v_0
\end{equation}
of the Cauchy problem for System \eqref{regularizedSW}
with the initial data $v_0$,
where $A$ and $f$ are defined in \eqref{A_and_f_definition}.
As above we can introduce the unitary group
\(
    \mathcal S_{\mu}(t) = e^{A P_{\mu} t}
\)
and notice that a solution of \eqref{frequency_truncatedSW}
satisfies the integral equation
\begin{equation}
\label{frequency_truncated_duhamel}
	v(t)
    =
    \mathcal S_{\mu}(t - t_0) v(t_0)
	+
    \int_{t_0}^t \mathcal S_{\mu}(t - r) f(v(r))
	dr
    , \quad
    0 \leqslant t_0 \leqslant t
	.
\end{equation}
On $X^{1/2}$ we define
\begin{equation*}
%\label{Hamiltonian}
	\mathcal H_{\mu}(v)
    =
	\mathcal H_{\mu}(\eta, u)
    =
    \frac 12 \int_\R
	\left(	
		\eta K^{-1} P_{\mu} \eta
		+
        u K^{-1} P_{\mu} u
		+
        \eta u^2
	\right)
	dx
\end{equation*}
that is obviously smooth.
It is straightforward to repeat the above arguments and conclude
that there exists a unique solution
\[
    v = v_{\mu}
    \in
    C   \left( [0, T]; X^s \right)
    \cap
    C^1 \left( (0, T); X^s \right)
\]
of \eqref{frequency_truncatedSW} for every given $\mu$.
Importantly,
here $s$ and $T$ are exactly as in the formulation of Lemma \ref{local_well_posedness_lemma}.
In particular, $T$ is independent of $\mu$.
Clearly,
\(
	\mathcal H_{\mu}(v(t))
\)
is smooth with respect to $t \in (0, T)$ and the chain rule
\[
    \frac d{dt}
	\mathcal H_{\mu}(v(t))
    =
    d \mathcal H_{\mu}(v(t))
    \partial_t v(t)
    =
	\int_\R
	\left(
		\eta K^{-1} P_{\mu} \partial_t \eta
		+
        u K^{-1} P_{\mu} \partial_t u
		+
        \frac 12 u^2 \partial_t \eta
        +
        \eta u \partial_t u
	\right)
	dx
\]
applies.
It is easy to see that \eqref{frequency_truncatedSW} has
the Hamiltonian structure \eqref{Hamiltonian_structure}
with the new functional $\mathcal H_{\mu}$ in place of $\mathcal H$
and the new variational derivatives
\begin{equation*}
    \frac{\delta \mathcal H_{\mu}}{\delta \eta}
    =
    K^{-1} P_{\mu} \eta
	+
    \frac 12 u^2
    \quad
    \text{ and }
    \quad
    \frac{\delta \mathcal H_{\mu}}{\delta u}
    =
    K^{-1} P_{\mu} u
    +
    \eta u
    .
\end{equation*}
Therefore,
\begin{multline*}
    \frac d{dt}
	\mathcal H_{\mu}(v(t))
    =
	\int_\R
	\left(
        \frac{\delta \mathcal H_{\mu}}{\delta \eta}
        \partial_t \eta
		+
        \frac{\delta \mathcal H_{\mu}}{\delta u}
        \partial_t u
	\right)
	dx
    =
    -
	\int_\R
	\left(
        \frac{\delta \mathcal H_{\mu}}{\delta \eta}
        K \partial_x
        \frac{\delta \mathcal H_{\mu}}{\delta u}
		+
        \frac{\delta \mathcal H_{\mu}}{\delta u}
        K \partial_x
        \frac{\delta \mathcal H_{\mu}}{\delta \eta}
	\right)
	dx
    \\
    =
    -
	\int_\R
    \partial_x
	\left(
        \frac{\delta \mathcal H_{\mu}}{\delta \eta}
        K
        \frac{\delta \mathcal H_{\mu}}{\delta u}
	\right)
	dx
    =
    0
    ,
\end{multline*}
and so $\mathcal H_{\mu}(v(t))$ is constant.
By continuity,
\(
    \mathcal H_{\mu}(v(t)) = \mathcal H_{\mu}(v_0)
\)
for all $t \in [0, T]$,
where
$v = v_{\mu}$ is the solution of the regularized Cauchy problem \eqref{frequency_truncatedSW}.

Now let $v$ be the solution of the original Cauchy problem described
by Lemma \ref{local_well_posedness_lemma}
and $v_{\mu}$ be the solution of \eqref{frequency_truncatedSW}.
We will show that $v_{\mu} \to v$ in $L^2(0, T; X^s)$ as $\mu \to \infty$.
The idea is to split the whole interval $[0, T]$ into small sub-intervals
of size $h > 0$ and to prove this convergence repeatedly on every sub-interval
moving from left to right.
Both $v$ and $v_{\mu}$ satisfy the integral equation \eqref{frequency_truncated_duhamel}
with $\mathcal S$ and $\mathcal S_{\mu}$ in place of the semigroup, respectively.
Hence
\begin{equation*}
	v_{\mu}(t)
    -
	v(t)
    =
    \mathcal S_{\mu}(t - t_0) v_{\mu}(t_0)
    -
    \mathcal S(t - t_0) v(t_0)
	+
    \int_{t_0}^t
    \left(
        \mathcal S_{\mu}(t - r) f(v_{\mu}(r))
        -
        \mathcal S(t - r) f(v(r))
	\right)
    dr
\end{equation*}
for $0 \leqslant t_0 \leqslant t \leqslant t_0 + h \leqslant T$
with $h$ to be chosen below independently of $\mu$.
We estimate this difference as
\begin{multline*}
	\norm{
        v_{\mu}(t)
        -
    	v(t)
    }_{X^s}
    \leqslant
	\norm{
        v_{\mu}(t_0)
        -
    	v(t_0)
    }_{X^s}
    +
	\norm{
        \left(
            \mathcal S_{\mu}(t - t_0)
            -
            \mathcal S(t - t_0)
        \right)
        v(t_0)
    }_{X^s}
	\\
    +
    \int_{t_0}^t
    \norm{
        f(v_{\mu}(r))
        -
        f(v(r))
	}_{X^s}
    dr
    +
    \int_{t_0}^t
    \norm{
        \left(
            \mathcal S_{\mu}(t - r)
            -
            \mathcal S(t - r)
    	\right)
         f(v(r))
	}_{X^s}
    dr
    .
\end{multline*}
Squaring this inequality and then integrating the result from $t_0$ to $t_0 + h$, one obtains
\begin{multline*}
    \frac 1{16}
	\norm{
        v_{\mu}
        -
    	v
    }_{ L^2(t_0, t_0 + h; X^s) }^2
    \leqslant
    h
	\norm{
        v_{\mu}(t_0)
        -
    	v(t_0)
    }_{X^s}^2
    \\
    +
    \int_{t_0}^{t_0 + h}
	\norm{
        \left(
            \mathcal S_{\mu}(t - t_0)
            -
            \mathcal S(t - t_0)
        \right)
        v(t_0)
    }_{X^s}^2
    dt
    +
    h^2
    \int_{t_0}^{t_0 + h}
    \norm{
        f(v_{\mu}(r))
        -
        f(v(r))
	}_{X^s}^2
    dr
	\\
    +
    h
    \int_{t_0}^{t_0 + h}
    \int_{t_0}^{t_0 + h}
    \norm{
        \left(
            \mathcal S_{\mu}(t - r)
            -
            \mathcal S(t - r)
    	\right)
         f(v(r))
	}_{X^s}^2
    dr dt
    .
\end{multline*}
From the proof of Lemma \ref{local_well_posedness_lemma} one can easily deduce
\(
	\norm{
        v_{\mu}
    }_{X_T^s}
    ,
	\norm{
    	v
    }_{X_T^s}
    \leqslant
	3
    \norm{
        v_0
    }_{X^s}
    ,
\)
and so
\[
    h^2
    \int_{t_0}^{t_0 + h}
    \norm{
        f(v_{\mu}(r))
        -
        f(v(r))
	}_{X^s}^2
    dr
    \leqslant
    C h^2
	\norm{
        v_{\mu}
        -
    	v
    }_{ L^2(t_0, t_0 + h; X^s) }^2
    ,
\]
where $C$ is independent of $\mu$.
This implies
\begin{multline*}
    \frac 1{20}
	\norm{
        v_{\mu}
        -
    	v
    }_{ L^2(t_0, t_0 + h; X^s) }^2
    \leqslant
    h
	\norm{
        v_{\mu}(t_0)
        -
    	v(t_0)
    }_{X^s}^2
    \\
    +
    \int_{t_0}^{t_0 + h}
    \int_{\mathbb R}
    \Abs{
        \mathcal F
        \left[
            \left(
                \mathcal S_{\mu}(t - t_0)
                -
                \mathcal S(t - t_0)
            \right)
            v(t_0)
        \right]
        (\xi)
	}^2
    \left( 1 + \xi^2 \right)^s
    d \xi
    dt
	\\
    +
    h
    \int_{t_0}^{t_0 + h}
    \int_{t_0}^{t_0 + h}
    \int_{\mathbb R}
    \Abs{
        \mathcal F
        \left[
            \left(
                \mathcal S_{\mu}(t - r)
                -
                \mathcal S(t - r)
        	\right)
             f(v(r))
        \right]
        (\xi)
	}^2
    \left( 1 + \xi^2 \right)^s
    d \xi
    dr dt
    .
\end{multline*}
These two integrals tend to zero as $\mu \to \infty$ by the dominated convergence theorem,
since
\[
    e^{ \pm i (t - r) \xi \theta_{\mu}(\xi) }
    -
    e^{ \pm i (t - r) \xi }
    \to
    0
    \text{ as }
    \mu \to \infty
    \text{ for every }
    t, r, \xi
\]
and the integrands are bounded by
\[
    2
    \Abs{
        \mathcal F
        \left[
            v(t_0)
        \right]
        (\xi)
	}^2
    \left( 1 + \xi^2 \right)^s
    \quad
    \text{ and }
    \quad
    2
    \Abs{
        \mathcal F
        \left[
             f(v(r))
        \right]
        (\xi)
	}^2
    \left( 1 + \xi^2 \right)^s
    ,
\]
respectively.
The first term
\(
	\norm{
        v_{\mu}(t_0)
        -
    	v(t_0)
    }_{X^s}
    \to
    0
\)
by choosing $t_0$ properly at least for a subsequence $\mu = \mu_k \to \infty$.
Indeed, for the first subinterval $t_0 = 0$ and this term is zero.
Therefore,
\(
    v_{\mu} \to v
\)
in
\(
    L^2(0, h; X^s)
\)
and so a subsequence
\(
    \norm{v_{\mu_k} - v}_{X^s}
    \to
    0
\)
almost everywhere on $[0, h]$.
Next candidate $t_0$ for left boundary of subinterval is taken in $(h/2 ,h]$,
so that
\(
    \norm{v_{\mu_k}(t_0) - v(t_0)}_{X^s}
    \to
    0
    .
\)
Thus by induction we obtain
\(
    \norm{v_{\mu_k} - v}_{X^s}
    \to
    0
\)
in $L^2(0, T)$
and almost everywhere on $[0, T]$.

Finally, we can prove the energy conservation
by analyzing the difference
\begin{equation}
\label{energy_conservation_difference}
    \mathcal H(v(t))
    -
    \mathcal H(v_0)
    =
    \mathcal H(v(t))
    -
    \mathcal H_{\mu}(v_{\mu}(t))
    +
    \mathcal H_{\mu}(v_0)
    -
    \mathcal H(v_0)
\end{equation}
for $t \in [0, T]$ and with $\mu = \mu_k$ described above.
Clearly,
\[
    \Abs{
        \mathcal H_{\mu}(v_0)
        -
        \mathcal H(v_0)
    }
    \leqslant
    \norm{v_0}_{\mathcal H}
    \norm{( P_{\mu} - I )v_0}_{\mathcal H}
    \to
    0
    .
\]
The first difference in \eqref{energy_conservation_difference}
is bounded by
\[
    \Abs{
        \mathcal H(v(t))
        -
        \mathcal H_{\mu}(v_{\mu}(t))
    }
    \leqslant
    \Abs{
        \mathcal H(v(t))
        -
        \mathcal H_{\mu}(v(t))
    }
    +
    \Abs{
        \mathcal H_{\mu}(v(t))
        -
        \mathcal H_{\mu}(v_{\mu}(t))
    }
    ,
\]
where the first term is estimated in the same way
\[
    \Abs{
        \mathcal H(v(t))
        -
        \mathcal H_{\mu}(v(t))
    }
    \lesssim
    \norm{( P_{\mu} - I )v(t)}_{\mathcal H}
    \to
    0
    ,
\]
while the last term is estimated by the mean-value theorem as follows.
The modified energy $\mathcal H_{\mu}$ is a continuously differentiable functional,
which one can check by repeating the arguments given
for analysis of $\mathcal H$ at the beginning of the proof.
Moreover,
\[
    \norm{d \mathcal H_{\mu}(w)}
    \leqslant
    C
    \left(
        1 + \norm{w}_{X^{1/2}}^2
    \right)
    \leqslant
    C
    \left(
        1 + \norm{w}_{X^s}^2
    \right)
\]
with the constant $C$ independent of $\mu$.
Substituting $w = \lambda v(t) + (1 - \lambda) v_{\mu}(t)$
with $0 < \lambda < 1$
and recalling that both $v(t)$ and $v_{\mu}(t)$ are bounded independently of $\mu$,
one obtains
\[
    \Abs{
        \mathcal H_{\mu}(v(t))
        -
        \mathcal H_{\mu}(v_{\mu}(t))
    }
    \leqslant
    C
    \left(
        1 + 9 \norm{v_0}_{X^s}^2
    \right)
    \norm{
        v(t)
        -
        v_{\mu}(t)
    }_{X^s}
    .
\]
The latter converges for almost every $t \in [0, T]$.
Thus, passing to the limit $\mu \to \infty$ in \eqref{energy_conservation_difference},
we obtain
\(
    \mathcal H(v(t))
    -
    \mathcal H(v_0)
    =
    0
\)
for almost every $t \in [0, T]$.
The proof is concluded
by continuity of
\(
    \mathcal H(v(t))
    .
\)
\end{proof}

The next step is to extend the local result of these lemmas.

%\textbf{ \textcolor{red}{to be continued ...} }

%%%%%%%%%%%%%%%%%%%%%%%%%%%%%%%%%%%%%%%%%%%%%%%%%%%%%%%%%%%%%%%%%%%%%%%%%%%%%
\section{Global well-posedness}
%%%%%%%%%%%%%%%%%%%%%%%%%%%%%%%%%%%%%%%%%%%%%%%%%%%%%%%%%%%%%%%%%%%%%%%%%%%%%
\setcounter{equation}{0}
%%%%%%%%%%%%%%%%%%%%%%%%%%%%%%%%%%%%%%%%%%%%%%%%%%%%%%%%%%%%%%%%%%%%%%%%%%%%%
%
\noindent
In this section we prove the following theorem.

\begin{theorem}
[Global existence]
\label{mainthm}
	Let $s > 1/2$.
	There exists $\delta>0$ such that if
	$$
        \eta_0, u_0 \in H^s(\R)
        \quad
        \text{ and }
        \quad
		\| \eta_0\|_{H^{1/2}(\R)}+ \| u_0\|_{H^{1/2}(\R)}
		\leqslant \delta
	$$
	then the Cauchy problem for \eqref{regularizedSW}
	with this initial data
	has a global-in-time solution 
	$$
		(\eta, u) \in C\left( \R; H^s(\R) \times H^s(\R)\right).
	$$
	Moreover,
	the solution is unique and
	depends real analytically on the initial data $(\eta_0, u_0) $
	on any finite time interval $[0, T]$.
\end{theorem}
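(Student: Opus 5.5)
Our plan is to combine energy conservation, which gives an a priori bound in $X^{1/2}$ for small data, with a persistence-of-regularity estimate that keeps the $X^s$ norm finite on every bounded time interval. The local existence time of Lemma \ref{local_well_posedness_lemma} depends on $\lVert v_0\rVert_{X^s}$, so this is enough to continue the local solution indefinitely. Uniqueness and analytic dependence on $[0,T]$ are then inherited from the local lemma by iteration over finitely many subintervals.

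\emph{Step 1: coercivity of $\mathcal H$ for small data.} The quadratic part $\norm{v}_{\mathcal H}^2$ is equivalent to $\norm{v}_{X^{1/2}}^2$, with $c_1\norm{v}_{X^{1/2}}^2\le \norm{v}_{\mathcal H}^2\le c_2\norm{v}_{X^{1/2}}^2$ because $K^{-1}(\xi)\simeq 1+|\xi|$. The cubic part satisfies $|\int\eta u^2|\le C\norm{v}_{X^{1/2}}^3$. Hence
\[
\mathcal H(v)\ge c_1\norm{v}_{X^{1/2}}^2-C\norm{v}_{X^{1/2}}^3 .
\]
We choose $\delta$ so small that $\mathcal H(v_0)\le c_2\delta^2+C\delta^3$ lies well below the barrier value of the function $r\mapsto c_1r^2-Cr^3$. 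By conservation of $\mathcal H$ (the preceding lemma) and continuity of $t\mapsto \norm{v(t)}_{X^{1/2}}$, the standard bootstrap gives $\norm{v(t)}_{X^{1/2}}\le C'\delta$ for the whole lifespan. For $s\ge 1/2$ the solution is continuous in $X^{1/2}$, so conservation applies.

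\emph{Step 2: growth bound for $\norm{v}_{X^s}$.} This step requires the $X^{1/2}$ control to bound the $X^s$ norm at most exponentially, and it is the main obstacle. Since $\mathcal S$ is unitary and $K\partial_x$ is bounded by $1$ on $H^s$, Duhamel gives
\[
\norm{v(t)}_{X^s}\le \norm{v_0}_{X^s}+\int_0^t\norm{(\eta u,u^2/2)}_{X^s}\,dt' .
\]
The algebra estimate is quadratic in $X^s$ and gives only local control. We therefore need a tame product estimate that is linear in the high norm,
\[
\norm{fg}_{H^s}\lesssim \norm{f}_{H^s}\norm{g}_{L^\infty}+\norm{f}_{L^\infty}\norm{g}_{H^s}.
\]
$L^\infty$ is not controlled by $H^{1/2}$, so we would try two routes. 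The first: since $K\partial_x$ smooths by one derivative, it suffices to bound $\norm{fg}_{H^{s-1}}$. For $1/2<s\le 3/2$ we bound it by $\norm{f}_{H^{1/2}}\norm{g}_{H^{s}}$ (a product estimate with one factor in $H^{1/2}$ and loss of one derivative, valid since $s-1<1/2$ and $1/2+s-(s-1)>1/2$ in one dimension). For larger $s$, we induct upward in steps of size less than one, using the already-controlled $H^{s-1}$ norm in the tame estimate. The second route, as a fallback: use the Brezis--Gallouet logarithmic inequality $\norm{f}_{L^\infty}\lesssim \norm{f}_{H^{1/2}}\sqrt{\log(2+\norm{f}_{H^s})}$. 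This yields $y'\lesssim y\log(2+y)$, which gives double-exponential but finite growth.

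\emph{Step 3: continuation.} In either route, Gronwall yields $\norm{v(t)}_{X^s}\le \norm{v_0}_{X^s}e^{Ct}$ (or a double exponential) on the maximal interval. The local time $T=(7C_s\norm{v(t_0)}_{X^s})^{-1}$ is therefore bounded below on each bounded time interval, so the solution extends to all $t\ge0$. Negative times follow by time reversibility via $(\eta,u,x)\mapsto(\eta,u,-x)$ or directly from the group $\mathcal S$. On a fixed $[0,T]$ the solution is a composition of finitely many local flow maps, each real analytic by Lemma \ref{local_well_posedness_lemma} on balls of radius bounded by the Gronwall bound. Therefore the global flow is real analytic on a neighbourhood, and uniqueness follows from local uniqueness.
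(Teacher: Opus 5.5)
\textbf{Step 2, route 1 fails.} Your Steps 1 and 3 match the paper, but the route you present as primary in Step 2 breaks at its first move. $K\partial_x$ does not smooth by one derivative. Its symbol is $i\tanh\xi$, which tends to $\pm i$ as $\xi\to\pm\infty$, so $K\partial_x$ is of order zero; you use exactly this when you say it has norm at most $1$ on $H^s$. It is $K$, not $K\partial_x$, that gains a derivative.

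The Duhamel integrand must therefore be estimated in $X^s$: you need $\lVert(\eta u,u^2/2)\rVert_{X^s}$, and the bound $\lVert fg\rVert_{H^{s-1}}\lesssim\lVert f\rVert_{H^{1/2}}\lVert g\rVert_{H^s}$ is irrelevant. Any bound on $\lVert fg\rVert_{H^s}$ that is linear in the high norm involves $\lVert f\rVert_{L^\infty}$, or an $H^{s_0}$ norm with $s_0>1/2$. The conserved energy controls neither. This is exactly the base case $1/2<s\le 3/2$ of your upward induction, so the induction never starts. Consequently the exponential bound $\lVert v_0\rVert_{X^s}e^{Ct}$ you invoke in Step 3 is unsupported.

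\textbf{Route 2 works and is the paper's argument.} The tame estimate makes $\lVert v\rVert_{X^s}$ grow at rate $\lVert\eta\rVert_{L^\infty}+\lVert u\rVert_{L^\infty}$ (Lemma~\ref{persistence_lemma}). The Brezis--Gallouet--Wainger inequality, combined with the uniform $X^{1/2}$ bound, turns this rate into $1+\log(2+\lVert v\rVert_{X^s})$. The logarithmic Gronwall lemma then gives the double-exponential bound $\exp(Ce^t)$, which is enough for continuation.

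You should state the inequality with its additive constant, $\lVert f\rVert_{L^\infty}\le C\bigl(1+\lVert f\rVert_{H^{1/2}}\sqrt{\log(2+\lVert f\rVert_{H^s})}\bigr)$. The homogeneous form you wrote fails under $f\mapsto\lambda f$ with $\lambda\to0$, because it would imply $H^{1/2}\hookrightarrow L^\infty$.

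Deriving the growth bound from Duhamel and the unitarity of $\mathcal S$, instead of the paper's energy identity (where the linear terms cancel after summation), is a legitimate and slightly cleaner variant. It yields an integral inequality directly, without differentiating $\lVert v(t)\rVert_{X^s}^2$ for a solution that is only $C^1$ into $X^{s-1}$. The log-Gronwall lemma then applies to the right-hand side of that inequality. If you make this route primary, your continuation, uniqueness and analyticity arguments go through as written.
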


Firstly, note that
the continuity argument provides us with boundedness
of $X^{1/2}$-norm of a solution $v = (\eta, u)$.
It is more convenient to work with the energy norm \eqref{energy_norm} instead.
Indeed,
there exists $C > 0$ such that
\[
	\lVert v \rVert ^2_{\mathcal H}
	( 1 - C \lVert v \rVert _{\mathcal H} )
	\leqslant
	\mathcal H(v)
	\leqslant
	\lVert v \rVert ^2_{\mathcal H}
	( 1 + C \lVert v \rVert_{\mathcal H} )
	.
\]
Take $\delta = (4C)^{-1}$
and regard a solution with $v_0 = v(0)$
having $\lVert v_0 \rVert _{\mathcal H} \leqslant \delta$.
By continuity
$\lVert v \rVert _{\mathcal H} \leqslant 2 \delta$
on some time interval $[0, T_{\delta}]$,
where $T_{\delta}$ is the greatest lower bound of admissible $t$
with $\lVert v(t) \rVert _{\mathcal H} > 2\delta$,
and so
\[
	\lVert v \rVert _{\mathcal H}
	\leqslant
	\sqrt{2 \mathcal H(v)} = \sqrt{2 \mathcal H(v_0)}
	\leqslant
	\sqrt{ 2 (1 + C \delta) } \delta < 2 \delta
\]
which means that
the continuous function
$\lVert v(t) \rVert _{\mathcal H}$ cannot reach the level
$2\delta$ as time evolves.

\begin{lemma}
[Persistence of regularity]
\label{persistence_lemma}
	Suppose $s > 1/2$ and a pair
	$\eta(t) \in H^s$, $u(t) \in H^s$
	solves System \eqref{regularizedSW}.
	Then the estimate
	\[
		\frac{d}{dt} \lVert (\eta, u) \rVert _{X^s}
		\lesssim
		\left(
			\lVert \eta \rVert _{L^{\infty}}
			+ \lVert u \rVert _{L^{\infty}}
		\right)
		\lVert (\eta, u) \rVert _{X^s}
	\]
    holds true.
\end{lemma}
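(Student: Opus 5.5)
The plan is a standard $H^s$ energy estimate. We use that the linear part of \eqref{regularizedSW} is skew-adjoint and that $K\partial_x$ is bounded on $L^2(\R)$. The nonlinear flux is then handled by the Kato--Ponce fractional Leibniz rule instead of the algebra property. Let $J^s = (1 - \partial_x^2)^{s/2}$, so $\lVert (\eta,u)\rVert_{X^s}^2 = \lVert J^s\eta\rVert_{L^2}^2 + \lVert J^s u\rVert_{L^2}^2$. Formally,
\[
\frac12\frac{d}{dt}\lVert (\eta,u)\rVert_{X^s}^2 = -\int_\R \left( J^s\eta\, \partial_x J^s u + J^s u\, \partial_x J^s \eta \right) dx - \int_\R \left( J^s\eta\, K\partial_x J^s(\eta u) + J^s u\, K\partial_x J^s(u^2/2) \right) dx .
\]

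\textbf{The two terms.} The first integral equals $-\int_\R \partial_x (J^s\eta\, J^s u)\,dx = 0$, so the linear part does not contribute. For the second, note that $K\partial_x = i\tanh D$ has symbol bounded by one, so $\lVert K\partial_x J^s w\rVert_{L^2} \leqslant \lVert w\rVert_{H^s}$. By Cauchy--Schwarz and the Kato--Ponce inequality on $\R$ (valid for every $s>0$),
\[
\lVert \eta u\rVert_{H^s} + \lVert u^2\rVert_{H^s} \lesssim \left( \lVert \eta\rVert_{L^\infty} + \lVert u\rVert_{L^\infty} \right) \lVert (\eta,u)\rVert_{X^s} ,
\]
the second integral is bounded by a constant times $\left(\lVert \eta\rVert_{L^\infty} + \lVert u\rVert_{L^\infty}\right)\lVert (\eta,u)\rVert_{X^s}^2$. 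This is where the regularization is essential. In the classical system a derivative falls on the nonlinearity, and the estimate would cost $\lVert \partial_x u\rVert_{L^\infty}$ and require commutator estimates. Here $K\partial_x$ absorbs the derivative, so only $L^\infty$ norms appear. Dividing by $\lVert (\eta,u)\rVert_{X^s}$ gives the claim. To avoid dividing by zero, one works instead with $\left(\lVert (\eta,u)\rVert_{X^s}^2+\varepsilon\right)^{1/2}$ and lets $\varepsilon\to0$.

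\textbf{Justification.} The main obstacle is making this computation rigorous for $s \in (1/2, 3/2)$. By Lemma \ref{classical_solution_lemma} the solution is only $C^1$ with values in $X^{s-1}$, so the pairing $\int J^s\eta\, \partial_t J^s\eta\,dx$ is not a priori meaningful. I would apply the frequency cut-off $P_\mu$ from the proof of energy conservation to the equation. The function $P_\mu v$ is $C^1$ in time with values in every $X^\sigma$, so the computation above holds for $\lVert P_\mu v\rVert_{X^s}^2$ exactly. Since $P_\mu$ commutes with $J^s$, $\partial_x$ and $K$, the linear term vanishes again. The nonlinear term becomes $\int_\R P_\mu J^s\eta\; K\partial_x P_\mu J^s(\eta u)\,dx$ plus its analogue for $u$. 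Because $P_\mu$ has operator norm at most one, this is bounded uniformly in $\mu$ by the same Kato--Ponce quantity.

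\textbf{Conclusion.} Integrate in time to get
\[
\lVert P_\mu v(t)\rVert_{X^s}^2 \leqslant \lVert P_\mu v(t_0)\rVert_{X^s}^2 + C\int_{t_0}^{t} \left( \lVert \eta\rVert_{L^\infty} + \lVert u\rVert_{L^\infty} \right) \lVert v\rVert_{X^s}^2\, dr .
\]
Letting $\mu\to\infty$ uses only the strong convergence $P_\mu\to I$ in $H^s$ and the continuity of $v$ in $X^s$. This yields the integral form of the estimate. The differential inequality of the lemma then holds in the sense of Dini derivatives, which is all that the subsequent Gronwall argument needs.
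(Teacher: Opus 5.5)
Your proposal is correct and follows essentially the same route as the paper: an $H^s$ energy identity in which the linear terms cancel by skew-adjointness, the bounded multiplier $K\partial_x = i\tanh D$ absorbs the derivative, and the nonlinear terms are controlled by the fractional Leibniz (Kato--Ponce) estimate followed by Cauchy--Schwarz. Your frequency-truncation argument with $P_\mu$ is a useful addition that makes rigorous, for $s \in (1/2, 3/2)$, the computation the paper carries out only formally; if you pass to the limit $\mu \to \infty$ by dominated convergence in the time-integrated identity rather than the inequality, you get that $\lVert v(t) \rVert_{X^s}^2$ is actually $C^1$, so Dini derivatives are not even needed.
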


\begin{proof}
Calculate the derivative of the elevation norm
\begin{equation*}
	\frac 12 \frac{d}{dt} \lVert \eta \rVert _{H^s}^2
	=
	- \int \left( J^s \eta \right)
	J^s \partial_x u
	- i \int \left( J^s \eta \right)
	J^s \tanh D(\eta u)
\end{equation*}
and the derivative of the velocity norm
\begin{equation*}
	\frac 12 \frac{d}{dt} \lVert u \rVert _{H^s}^2
	=
	- \int \left( J^s u \right)
	J^s \partial_x \eta
	- \frac i2 \int \left( J^s u \right) J^s \tanh D u^2
	.
\end{equation*}
After summation of these two lines,
the first parts on the right hand side will cancel each other.
To estimate the second parts notice
\[
	\lVert J^s \tanh D(\eta u) \rVert _{L^2}
	\lesssim
	\lVert \eta \rVert _{H^s}
	\lVert u \rVert _{L^{\infty}}
	+
	\lVert \eta \rVert _{L^{\infty}}
	\lVert u \rVert _{H^s}
\]
and
\begin{equation*}
	\left \lVert J^s \tanh D u^2 \right \rVert _{L^2}
	\lesssim
	\lVert u \rVert _{L^{\infty}}
	\left \lVert u \right \rVert _{H^s}
	.
\end{equation*}
We conclude the proof by applying Hölder’s inequality.
\end{proof}

We will also need the following
Gronwall type inequality.
For the proof, see \cite{Dinvay2019}, for example.

\begin{lemma}
[Gronwall inequality]
	Let $y$ be an absolutely continuous non-negative
	function defined on some interval $[0, T]$.
	Suppose that almost everywhere
	\[
		y' \leqslant y \log y
		.
	\]
	Then there exists $C > 0$ independent of $T$
	such that
	\[
		y(t) \leqslant \exp \left( Ce^t \right)
		.
	\]
\end{lemma}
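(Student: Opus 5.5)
The plan is to pass to the double logarithm, since $y' \leqslant y\log y$ becomes linear after the substitution $z = \log \log y$ or, more conveniently, $z = \log y$ together with $z' \leqslant z$. The only difficulty is that $\log y$ may be negative or undefined where $y \leqslant 1$, so I first truncate $y$ from below.

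Set $w = \max(y, e)$. As the maximum of two absolutely continuous functions, $w$ is absolutely continuous on $[0,T]$, and $w \geqslant e$.

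I claim that $w' \leqslant w \log w$ almost everywhere, and I check this on three sets.
\begin{itemize}
\item On the relatively open set $\{y > e\}$ we have $w = y$, so $w' = y' \leqslant y\log y = w\log w$ a.e.
\item On the relatively open set $\{y < e\}$, $w \equiv e$ is locally constant, so $w' = 0 \leqslant e = w\log w$.
\item On the level set $\{y = e\}$ I use the standard fact that an absolutely continuous function has derivative zero at almost every point of any of its level sets. Applied to both $y$ and $w$ (on this set $w = e$ as well), it gives $w' = 0 \leqslant w \log w$ a.e. there.
\end{itemize}

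Next, since $w \geqslant e$, the function $z = \log w$ is absolutely continuous with $z \geqslant 1$. Moreover $z' = w'/w \leqslant \log w = z$ a.e.

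Hence $(e^{-t} z)' = e^{-t}(z' - z) \leqslant 0$ a.e. Since $e^{-t}z$ is absolutely continuous, it is nonincreasing, so $z(t) \leqslant z(0)e^t$.

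Exponentiating gives
\[
	y(t) \leqslant w(t) \leqslant \exp\left( C e^t \right),
	\qquad
	C = \log \max\left( y(0), e \right),
\]
and $C$ clearly does not depend on $T$ (it depends only on the initial value $y(0)$).

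The only delicate point is the a.e. inequality on the level set $\{y = e\}$, which the level-set fact above settles. Everything else is a routine integrating-factor argument.
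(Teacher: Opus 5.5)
Your proof is correct and complete. The paper does not prove this lemma itself; it only refers to an earlier paper of the first author, so there is no in-text argument to compare with.

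Your route is the natural one. You truncate to $w=\max(y,e)$, observe that $z=\log w\geqslant 1$ is absolutely continuous with $z'\leqslant z$ a.e., and integrate with the factor $e^{-t}$. This is equivalent to the usual substitution $(\log\log w)'\leqslant 1$.

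The truncation is what makes the argument clean. The hypothesis is never used where $y\leqslant e$, so the region where $y\log y$ is negative, or only conventionally defined at $y=0$, causes no trouble. The level-set fact you quote also correctly disposes of $\{y=e\}$: the isolated points of that set are countable, and at its other points of differentiability the difference quotient can be taken along the level set.

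Your explicit constant $C=\log\max(y(0),e)$ makes precise what ``independent of $T$'' means. $C$ depends only on $y(0)$, which matches the dependence on $\lVert v(0)\rVert_{X^s}$ used in the proof of Theorem~\ref{mainthm}.

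Incidentally, the inequality actually derived in that proof is $y'\leqslant c\,(1+\log(2+y))\,y$ for some constant $c$, which the lemma as stated does not literally cover. Your argument handles it without change. Set $Y=e(2+y)$; then $Y'\leqslant cY\log Y$, hence $(\log Y)'\leqslant c\log Y$, and so $y(t)\leqslant\exp\bigl(Ce^{ct}\bigr)$.
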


Our variables $\eta$, $u$ are bounded in
$H^{1/2}$-norm at least for small initial data and
so we are able to use
the following limiting case of the Sobolev embedding theorem.

\begin{lemma}
[Brezis-Gallouet-Wainger inequality]
\label{Brezis_lemma}
	Suppose $f \in H^s(\mathbb R^n)$ with $s > n / 2$.
	Then
	\begin{equation}
	\label{Brezis_inequality}
		\lVert f \rVert_{L^{\infty}}
		\leqslant
		C_{s, n}
		\left(
			1 + \lVert f \rVert_{H^{n/2}}
			\sqrt{ \log( 2 + \lVert f \rVert_{H^s} ) }
		\right)
		.
	\end{equation}
\end{lemma}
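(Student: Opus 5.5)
The plan is the standard Fourier-analytic proof of the Brezis--Gallou\"et--Wainger inequality, splitting the frequency space at a cutoff $N \geqslant 1$ chosen at the end. Since $s > n/2$, $\hat f \in L^1(\mathbb R^n)$. We bound the $L^\infty$ norm by the $L^1$ norm of the Fourier transform:
\[
	\lVert f \rVert_{L^\infty}
	\leqslant
	(2\pi)^{-n} \int_{\mathbb R^n} \abs{\hat f(\xi)}\, d\xi
	=
	(2\pi)^{-n}\left( \int_{\abs{\xi} \leqslant N} + \int_{\abs{\xi} > N} \right) \abs{\hat f(\xi)}\, d\xi .
\]
The low and high frequencies are then estimated separately by the Cauchy--Schwarz inequality, with the weights $(1+\abs\xi^2)^{n/2}$ and $(1+\abs\xi^2)^{s}$ respectively.

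For the low-frequency part,
\[
	\int_{\abs\xi\leqslant N}\abs{\hat f}
	\leqslant
	\lVert f\rVert_{H^{n/2}}
	\left(\int_{\abs\xi\leqslant N}(1+\abs\xi^2)^{-n/2}d\xi\right)^{1/2} .
\]
In polar coordinates the last integral is bounded by $C_n\left(1+\int_1^N r^{-n}r^{n-1}dr\right) = C_n(1+\log N)$. For the high-frequency part,
\[
	\int_{\abs\xi>N}\abs{\hat f}
	\leqslant
	\lVert f\rVert_{H^s}\left(\int_{\abs\xi>N}\abs\xi^{-2s}d\xi\right)^{1/2}
	=
	C_{s,n}\lVert f\rVert_{H^s} N^{n/2-s},
\]
which is finite precisely because $2s > n$. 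Together these give
\[
	\lVert f\rVert_{L^\infty}
	\leqslant
	C\left(\lVert f\rVert_{H^{n/2}}\sqrt{1+\log N} + N^{-(s-n/2)}\lVert f\rVert_{H^s}\right).
\]

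Next, choose $N$ so that the second term is at most a constant: take $N=(2+\lVert f\rVert_{H^s})^{1/(s-n/2)}\geqslant 1$. Then $N^{-(s-n/2)}\lVert f\rVert_{H^s}\leqslant 1$, and $\log N = (s-n/2)^{-1}\log(2+\lVert f\rVert_{H^s})$. Finally, because $\log 2>0$, we have $1+\log N\leqslant C_{s,n}\log(2+\lVert f\rVert_{H^s})$, which absorbs the additive $1$ under the square root and yields \eqref{Brezis_inequality}.

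The only delicate step is the low-frequency integral. At the endpoint $H^{n/2}$ the weight $(1+\abs\xi^2)^{-n/2}$ is exactly borderline non-integrable, and this is the source of the logarithm. One must also check that the dependence on $N$ is only logarithmic, so that the optimization in $N$ produces $\sqrt{\log}$ rather than a power. Everything else is routine bookkeeping of constants depending only on $s$ and $n$.
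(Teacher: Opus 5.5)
Your proof is correct. The splitting at $\abs{\xi}=N$ works, as does the borderline bound $C_n(1+\log N)$ for $\int_{\abs{\xi}\leqslant N}(1+\abs{\xi}^2)^{-n/2}\,d\xi$ and the tail bound $C_{s,n}N^{n/2-s}\lVert f\rVert_{H^s}$. With the choice $N=(2+\lVert f\rVert_{H^s})^{1/(s-n/2)}$ and the observation $\log(2+\lVert f\rVert_{H^s})\geqslant \log 2$, these give exactly the claimed inequality, with constants depending only on $s$ and $n$.

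The paper does not prove this lemma itself. It cites Brezis--Gallou\"et for the case of a domain in $\mathbb R^2$ and Brezis--Wainger for general Sobolev spaces, the latter via convolution inequalities. Your argument is the classical Fourier-splitting proof from the Brezis--Gallou\"et paper, carried out directly on $\mathbb R^n$ for arbitrary $s>n/2$.

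The citations give generality (domains, non-$L^2$ Sobolev scales) that the paper never uses; it only needs $n=1$ with $H^{1/2}$ versus $H^s(\mathbb R)$. Your version is a self-contained proof of precisely the form the paper uses. In particular, the unnormalized factor $\lVert f\rVert_{H^{n/2}}$ comes straight out of your estimate, rather than from rescaling a statement made under the normalization $\lVert f\rVert_{H^{n/2}}\leqslant 1$.
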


Inequality \eqref{Brezis_inequality} was first put forward
and proved for a domain in $\mathbb R^n$ with $n = 2$
in the work by Brezis, Gallouet
\cite{Brezis_Gallouet}.
It was extended to the other Sobolev spaces in
\cite{Brezis_Wainger}.
These two inequalities allow us to close
the regularity persistence argument.

\begin{proof}[Proof of Theorem \ref{mainthm}]
Suppose $s > 1/2$ and
$v(t) = ( \eta(t), u(t) ) \in X^s$ solves
System \eqref{regularizedSW} on some time interval.
Let its initial data $v(0)$ be small
with respect to $X^{1/2}$-norm.
Then $v(t)$ stays bounded in $X^{1/2}$
by a constant independent of the time interval.
Hence from the Brezis-Gallouet-Wainger limiting embedding
\eqref{Brezis_inequality} one deduces
\[
	\lVert \eta(t) \rVert _{L^{\infty}}
	+
	\lVert u(t) \rVert _{L^{\infty}}
	\lesssim 1 +
	\log \left( 2 + \lVert v(t) \rVert _{X^s} \right)
    .
\]
Here we used the elementary estimate $\sqrt{\log z} \leqslant 1 + \log z$
valid for $z \geqslant 1$.
Now applying Lemma \ref{persistence_lemma} we obtain
\[
	\frac{d}{dt} \lVert v \rVert _{X^s}
	\lesssim
	\left(
		1 +
		\log \left( 2 + \lVert v \rVert _{X^s} \right)
	\right)
	\lVert v \rVert _{X^s}
	.
\]
Finally, applying the Gronwall inequality we come to the estimate
\[
	\lVert v \rVert _{X^s}
	\leqslant
	\exp
	\left(
		C e^t
	\right)
	,
\]
where constant $C$ depends only on $s$,
$\lVert v(0) \rVert _{X^{1/2}}$ and
$\lVert v(0) \rVert _{X^s}$.
This estimate allows one to reapply the
local result and extend the solution on any time interval.
\end{proof}

%%%%%%%%%%%%%%%%%%%%%%%%%%%%%%%%%%%%%%%%%%%%%%%%%%%%%%%%%%%%%%%%%%%%%%%%%%%%%
\section{Numerics}
%%%%%%%%%%%%%%%%%%%%%%%%%%%%%%%%%%%%%%%%%%%%%%%%%%%%%%%%%%%%%%%%%%%%%%%%%%%%%
\setcounter{equation}{0}
%%%%%%%%%%%%%%%%%%%%%%%%%%%%%%%%%%%%%%%%%%%%%%%%%%%%%%%%%%%%%%%%%%%%%%%%%%%%%
%
\noindent
We continue to work in the non-dimensional settings
$h_0 = 1$ and $g = 1$.
In this section we deal with numerical simulations
of the shallow water systems \eqref{SWsystem} and
\eqref{regularizedSW}.
To make the comparison more objective,
we also solve numerically
a fully dispersive analogue of \eqref{regularizedSW}
studied thoroughly in \cite{Dinvay, Dinvay_Dutykh_Kalisch}
and having the form
\begin{equation}
\label{regularizedHP}
	\left.
	\begin{array}{rcl}
		\eta_t + h_0 u_x  + K ( \eta u )_x
		& = &
		0
		, \\
		u_t + g K \eta_x + K ( u^2/2 )_x
		& = &
		0
		.
	\end{array}
	\right\}
\end{equation}
This model demonstrates a good agreement with the
full water wave simulations and so can be a reference for comparison
in the present paper.
For time evolution simulations we apply a spectral split-step scheme
described in \cite{Dinvay_Dutykh_Kalisch}.
The scheme has proved to be very efficient both
with respect to speed and accuracy of calculations.
It works for Systems \eqref{regularizedHP},
\eqref{regularizedSW} and before shock development
for the standard shallow water system \eqref{SWsystem}.
The computational domain is $[- 60, 60]$,
the number of Fourier modes is set to $1024$ and
the time step is $0.05$.
No de-aliasing strategy was found to be necessary.

In Figure \ref{GaussEvolution} evolution of a Gaussian profile is depicted
for different models.
\begin{figure}[t!]
	\centering
%	\subfigure
	{
		\includegraphics[width=0.9\textwidth]
		{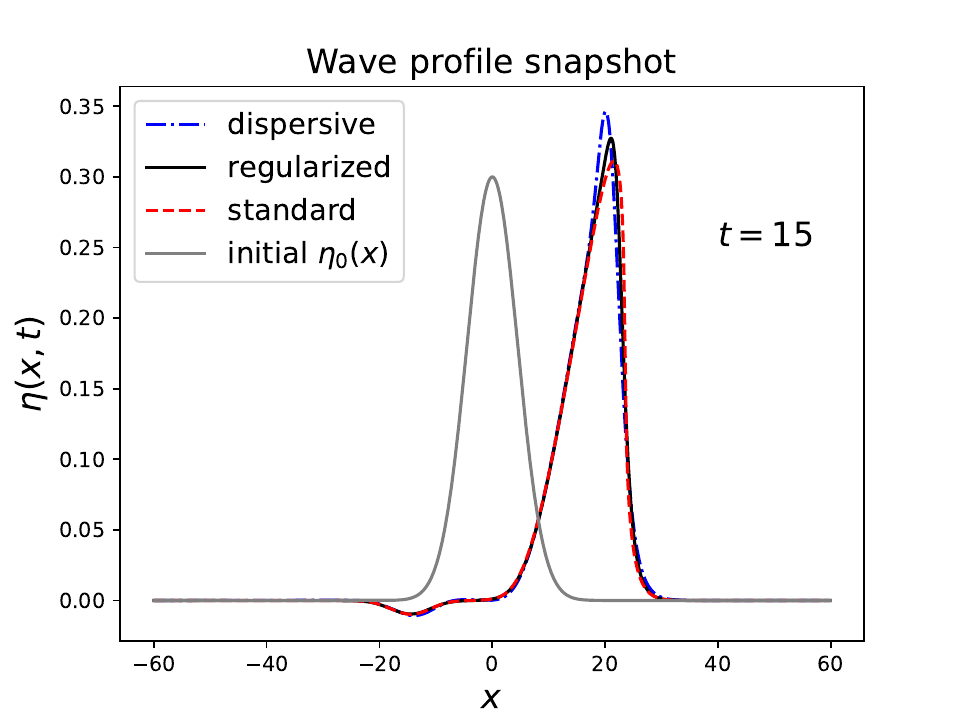}
	}
	\caption
	{
		The thin gray curve represents the initial surface elevation $\eta_0(x)$.
		All solutions are given at time $t=15$
		just before the shallow water model
		\eqref{SWsystem}, corresponding to the red dashed line, is breaking.
		The black solid curve corresponds to the regularized
		shallow water system \eqref{regularizedSW}
		and the blue dash-dot line -- to the fully dispersive model
		\eqref{regularizedHP}.
	}
\label{GaussEvolution}
\end{figure}
\begin{figure}[t!]
	\centering
%	\subfigure
	{
		\includegraphics[width=0.9\textwidth]
		{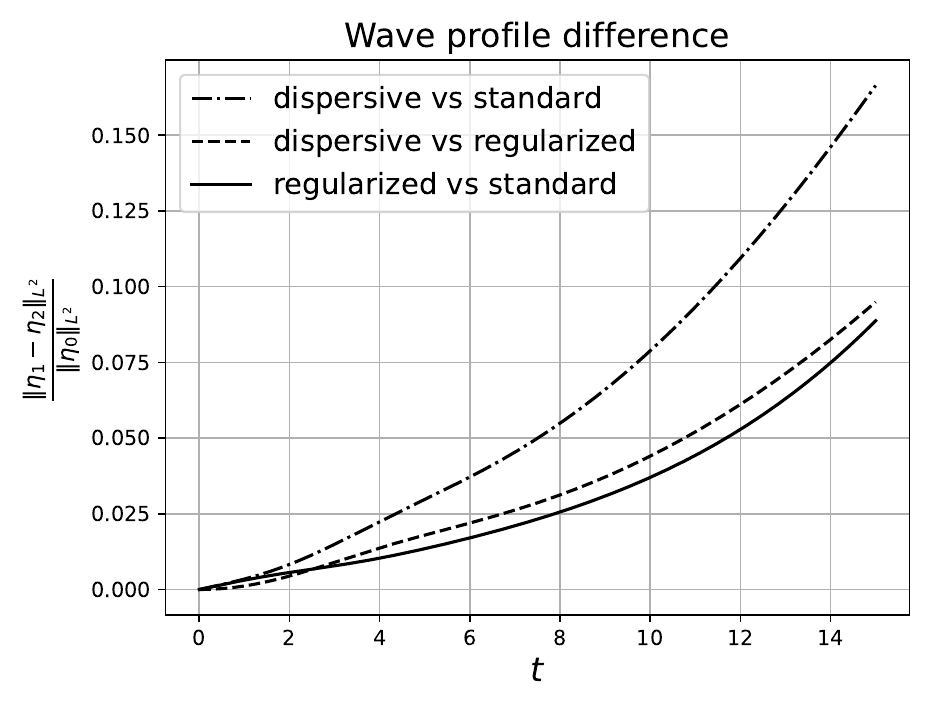}
	}
	\caption
	{
		The difference between solutions of
        the standard shallow water model \eqref{SWsystem},
        the regularized shallow water model \eqref{regularizedSW}
        and the fully dispersive model \eqref{regularizedHP}.
	}
\label{DifferenceEvolution}
\end{figure}
The initial surface elevation is taken the same for all
the models and it has the Gaussian form
\[
	\eta_0(x) = 0.3 \exp \left( -x^2 / 40 \right)
	.
\]
It is depicted by the gray line in Figure \ref{GaussEvolution}.
We set
\(
	u_0 = \eta_0
    ,
\)
which guarantees that most part of the wave profile will travel to the right,
by linearization argument. 
The details
can be found in \cite{Dinvay_Dutykh_Kalisch},
where it is discussed how
imposing the initial velocity
\(
	u_0 = \sqrt{ K } \eta_0
\)
one can create quasi-right-moving waves for \eqref{regularizedHP}.
Since $K \approx 1$ at low frequencies, the practical difference
between the different models under consideration is small.
As a result one can clearly see that most part of the initial wave
moved to the right in Figure \ref{GaussEvolution}.
The conservation of the discrete Hamiltonian was monitored:
$\mathcal H(v(t))$ monotonically decreased from $\mathcal H(v(0))$,
that is close to 0.8 for each model,
by $6.4 \cdot 10^{-13}$ for \eqref{regularizedHP},
by $6.7 \cdot 10^{-13}$ for \eqref{regularizedSW}
and 
by $1.9 \cdot 10^{-10}$ for \eqref{SWsystem}.
It is evident from the figure that all three models
steepen, however, only the solution of \eqref{SWsystem} develops a shock.
Moreover, the difference between solutions of \eqref{SWsystem}
and \eqref{regularizedSW} seems negligible,
as seen in Figure \ref{DifferenceEvolution},
so one may prefer to use the regularization \eqref{regularizedSW}
instead of a dissipative type scheme \cite{Toro} treating
the shock-developing system \eqref{SWsystem}.
We remark that the positivity of the total depth $h = 1 + \eta$
is preserved through all our calculations.
\begin{figure}[t!]
	\centering
%	\subfigure
	{
		\includegraphics[width=0.9\textwidth]
		{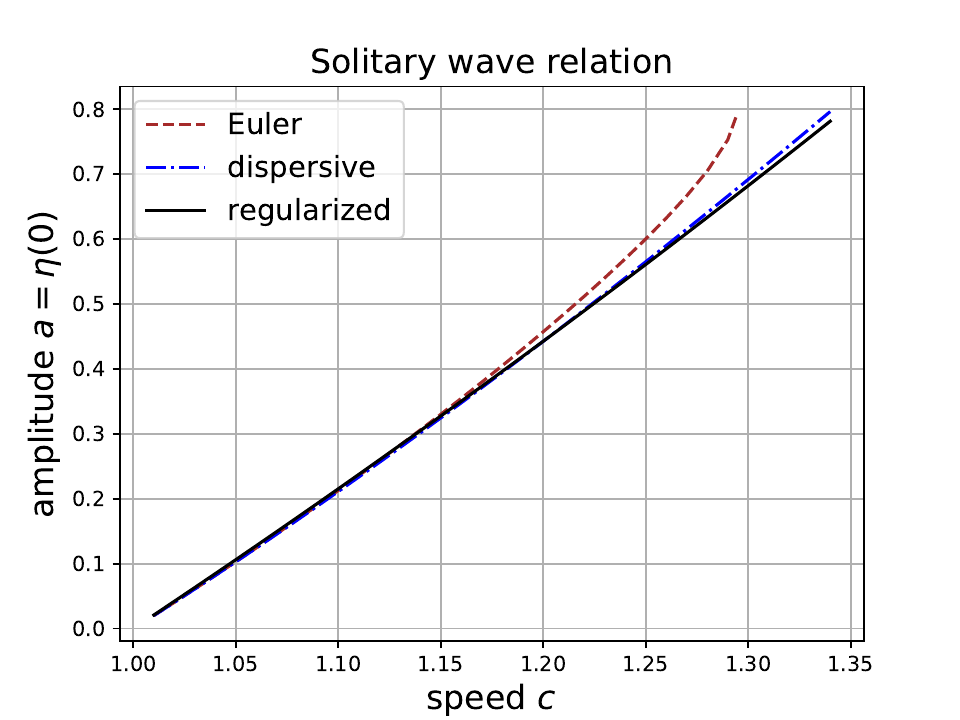}
	}
	\caption
	{
		Amplitude versus speed relation $a(c)$
		of solitary waves:
		the Euler system -- brown dashed line,
		the regularized shallow water system \eqref{regularizedSW}
		-- black solid line,
		the fully dispersive model
		\eqref{regularizedHP} -- blue dash-dot line.
	}
\label{AmplitudeSpeed}
\end{figure}
\begin{figure}[t!]
	\centering
%	\subfigure
	{
		\includegraphics[width=0.9\textwidth]
		{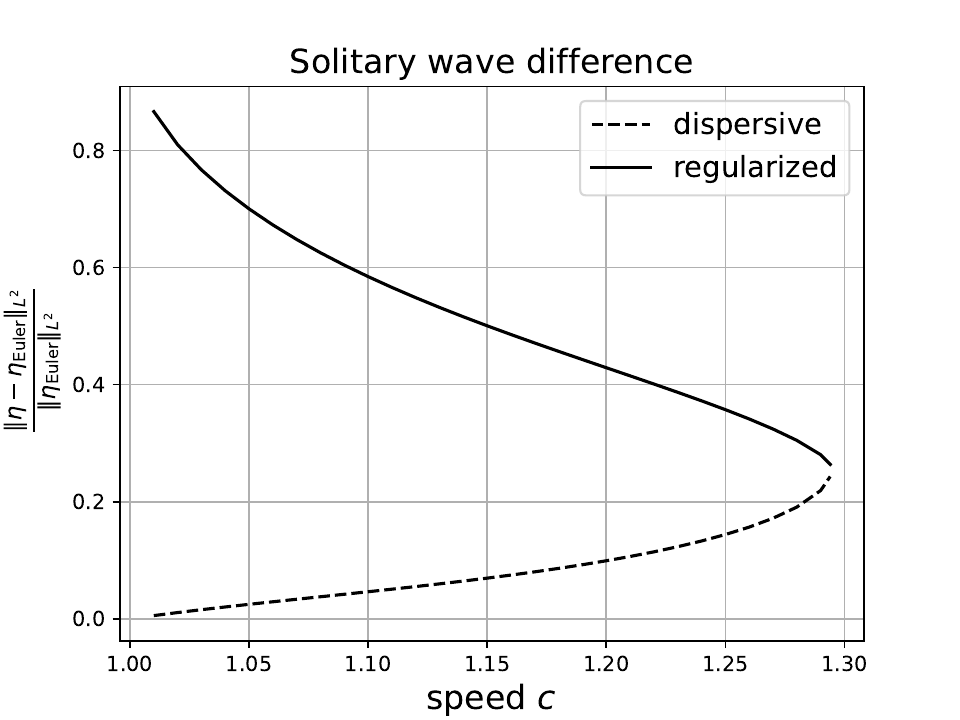}
	}
	\caption
	{
		Accuracy of approximation of the full solitary waves.
		The solid line corresponds to the regularized
		shallow water system \eqref{regularizedSW}
		and the dashed line -- to the fully dispersive model
		\eqref{regularizedHP}.
	}
\label{Difference}
\end{figure}
\begin{figure}[t!]
	\centering
%	\subfigure
	{
		\includegraphics[width=0.9\textwidth]
		{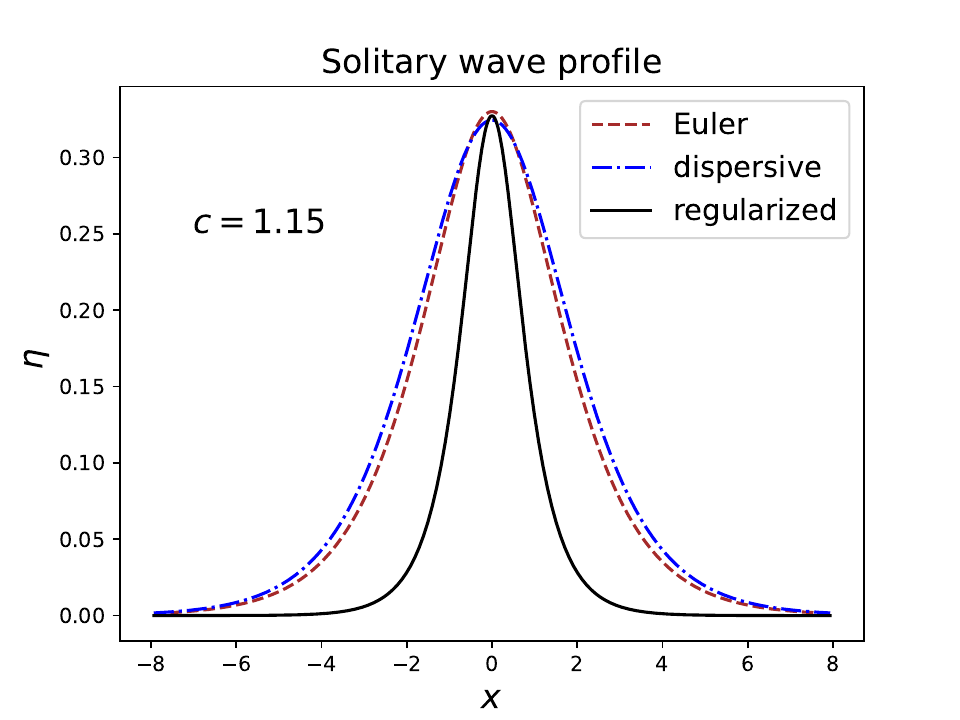}
	}
	\caption
	{
		Solitary waves corresponding to
		the Froude number $c = 1.15$.		
		The black solid line corresponds to the regularized
		shallow water system \eqref{regularizedSW}
		and the blue dash-dot line -- to the fully dispersive model
		\eqref{regularizedHP},
		the brown dashed line -- to the Euler system.
	}
\label{Soliton}
\end{figure}

A fundamental question for nonlinear dispersive wave models
is whether they admit solitary-wave solutions.
In the case of the regularized system \eqref{regularizedSW},
we address this question at a numerical level
and find clear evidence for the existence of solitary waves.
Establishing their existence rigorously would require a dedicated analytical framework
and is not pursued here. 
We look for solutions $\eta(x, t) = \eta(x - ct)$
and $u(x, t) = u(x - ct)$,
where we keep the same notation
$\eta$, $u$ for solitary wave profiles.
In our non-dimensional framework
speed $c$ coincides with the Froude number.
The regularized shallow water system \eqref{regularizedSW}
reduces to
\begin{align}
\label{solitary_sys1}
	c\eta &=
	u + K (\eta u)
	, \\
\label{solitary_sys2}
	cu &=
	\eta + K u^2 / 2
	.
\end{align}
It can be rewritten as $\mathcal L(\eta, u) = \mathcal N(\eta, u)$,
where
the linear $\mathcal L$ and the non-linear $\mathcal N$ parts
are as follows
\[
	\mathcal L(\eta, u)
	=
	\begin{pmatrix}
		c & -1
		\\
		- 1 & c
	\end{pmatrix}
	\begin{pmatrix}
		\eta
		\\
		u
	\end{pmatrix}
	, \quad
	\mathcal N(\eta, u)
	=
	\begin{pmatrix}
		K (\eta u)
		\\
		K u^2 / 2
	\end{pmatrix}
	.
\]
In this form, the system can be solved numerically
applying the Petviashvili iteration method
\cite{Clamond_Dutykh2013}.
The iterative scheme is defined via
\[
	( \eta_{n+1}, u_{n+1} )^T
	=
	S_n^2 \mathcal L^{-1}( \mathcal N( \eta_n, u_n ) )
\]
where $S_n$ is a stabilisation factor computed by
\[
	S_n = \frac
	{ \int ( \eta_n, u_n ) \mathcal L( \eta_n, u_n ) dx }
	{ \int ( \eta_n, u_n ) \mathcal N( \eta_n, u_n ) dx }
	.
\]
Note that matrix $\mathcal L$
is invertible if and only if
$c^2 > 1$.
The computational domain is set to $[-140, 140]$
with 8192 Fourier modes.
The iterative procedure
is terminated when the difference
between
\(
	( \eta_{n+1}, u_{n+1} )
\)
and
\(
	( \eta_{n}, u_{n} )
\)
in $L^2$-norm is less than $1.8 \cdot 10^{-13}$.

For the reference,
a similar splitting is applied to
the fully dispersive system \eqref{regularizedHP} in \cite{Dinvay}.
Additionally, we compute the Euler solitary waves
that can be transformed by means of a conformal-mapping to solutions of Babenko's equation.
For details we refer to \cite{Dutykh_Clamond2014},
because it gives the mathematical essence of the conformal-mapping,
Babenko-type and Petviashvili framework.
The concise original presentation of the algorithmic approach used
for the steady Euler solitary-wave calculations can be found in \cite{Clamond_Dutykh2013}.
It is implemented in the freely available
software \cite{Dutykh_code}.
Furthermore,
\cite{Clamond_Dutykh_Duran2015}
extends the same computational philosophy
to generalized solitary and capillary-gravity waves and therefore places the present
numerical solitary-wave study in a broader, well-established computational context.
The paper \cite{Dutykh_Clamond_Duran2016} provides a more detailed computational
treatment of generalized solitary waves and broadens the methodological basis of the code
and iterative framework.
Finally, \cite{Clamond_Dutykh2018} presents a mature and comprehensive
arbitrary-depth formulation of the steady-wave computation framework.

In Figure \ref{AmplitudeSpeed}
one can see the dependence of amplitude $a = \eta(0)$
of solitary waves on their speed $c$.
The brown dashed line corresponds to the full Euler model,
with the highest wave having the speed $c \approx 1.29421$.
We also report a few values in the Table \ref{amplitude_speed_table}.
\begin{table}[h!]
    \centering
    \renewcommand{\arraystretch}{1.4} 
    \begin{tabular}{|c|c|c|c|}
        \hline
         $c$        & 1.1 & 1.2 & 1.3
         \\ 
        \hline
         $\eta(0)$
         &
         0.2153082668048
         &
         0.4425522140106
         &
         0.68201597861631
         \\ 
        \hline
         $u(0)$
         &
         0.2082890214947
         &
         0.414330282965
         &
         0.61820536441872
         \\ 
        \hline
    \end{tabular}
    \caption{
        Solitary wave amplitudes for the regularized shallow water system \eqref{regularizedSW}.
    }
    \label{amplitude_speed_table}
\end{table}
The relations associated with models
\eqref{regularizedSW} and \eqref{regularizedHP}
are very close.
However, as one may anticipate, the fully dispersive
model \eqref{regularizedHP}
approximates Euler solitary waves better
than \eqref{regularizedSW}.
In Figure \ref{Difference} one can see the dependence on speed of
the relative difference
%$\epsilon(c) = d( \eta_0, \eta)$,
\(
	\epsilon(c) =
	{ \lVert \eta_{\text{Euler}} - \eta \rVert _{L^2} }
	/	
	{ \lVert \eta_{\text{Euler}} \rVert _{L^2} }
	,
\)
where the Euler solitary wave $\eta_{\text{Euler}}$
and $\eta$, standing either for \eqref{regularizedSW}
or \eqref{regularizedHP}, correspond
to the same speed $c$.
It turns out that the shallow water regime
results in narrower solitary waves, but of the same size
(see Figure \ref{Soliton}).
Nevertheless,
for high and fast waves, the regularized shallow water system \eqref{regularizedSW} exhibits accuracy comparable to that of the fully dispersive system \eqref{regularizedHP},
as evident in Figure \ref{Difference}.

%%%%%%%%%%%%%%%%%%%%%%%%%%%%%%%%%%%%%%%%%%%%%%%%%%%%%%%%%%%%%%%%%%%%%%%%%%%%%
\section{Conclusions}
%%%%%%%%%%%%%%%%%%%%%%%%%%%%%%%%%%%%%%%%%%%%%%%%%%%%%%%%%%%%%%%%%%%%%%%%%%%%%
\setcounter{equation}{0}
%%%%%%%%%%%%%%%%%%%%%%%%%%%%%%%%%%%%%%%%%%%%%%%%%%%%%%%%%%%%%%%%%%%%%%%%%%%%%
%
\noindent
We introduced a shallow water model \eqref{regularizedSW} that has the potential
to substitute the standard system \eqref{SWsystem} in practice.
The analytical sections provide rigorous local and small-data global well-posedness results.
This is an improvement on the shallow-water system which can feature shock formation for
data of any size which prevents global well-posedness of smooth solutions \cite{Evans}. Even popular regularizations
of the shallow-water system may feature large-time, but not necessarily global existence \cite{Israwi2011}.
Our numerical simulations support
the claim that the proposed model can propagate steep profiles without the same visible shock
formation displayed by the standard shallow water system in the reported experiment.
However,
a broader claim of complete shock suppression for all data
can only be formulated as a hypothesis at the current stage,
since it can only be based on an arbitrary-data global dynamics analysis.
We computed numerically solitary waves for \eqref{regularizedSW}.
A rigorous analytical proof of their existence will hopefully be presented elsewhere.

%
%---------------------------------------------------------------------
%
%
\vskip 0.05in
\noindent
{\bf Acknowledgments.}
{
    ED is supported by the Research Council of Norway through
    its Centres of Excellence scheme (Hylleraas centre, 262695).
}
%
%
%----------------------------------------------------------------------------

%%%%%%%%%%%%%%%%%%%%%%%%%%%%%%%%%%%%%%%%%%%%%%%%%%%%%%%%%%%%%%%%%%%%%%%%%%%%%
%%%%%%%%%%%%%%%%%%%%%%%%%%%%%%%%%%%%%%%%%%%%%%%%%%%%%%%%%%%%%%%%%%%%%%%%%%%%%
%%%%%%%%%%%%%%%%%%%%%%%%%%%%%%%%%%%%%%%%%%%%%%%%%%%%%%%%%%%%%%%%%%%%%%%%%%%%%
\bibliographystyle{abbrvnat}
\bibliography{bibliography}

\begin{thebibliography}{99}
\small
\renewcommand{\baselinestretch}{0.95}
\setlength{\itemsep}{-0.5mm}


\bibitem{Aceves_Sanchez_Minzoni_Panayotaros}
Aceves-S\'{a}nchez, P., Minzoni, A.A. and Panayotaros, P. 
{\em Numerical study  of a nonlocal model for water-waves with variable depth}, 
Wave Motion {\bf 50} (2013), 80--93.


\bibitem{Ali_Kalisch2010} A. Ali and H. Kalisch,
        {\it Energy balance for undular bores},
          Compt. Rend. Mecanique, {\bf 338} (2010), 67-70. 


\bibitem{Bondehagen_Roeber_Kalisch_Buckley2024}
A. Bondehagen, V. Roeber, H. Kalisch, M. Buckley, M. Stre{\ss}er, M. Cysewski, J. Horstmann, 
M. Bj{\o}rnestad, O.E. Ige, H.G. Fr{\o}ysa and R. Carrasco-Alvarez,
{\em Wave-driven current and vortex patterns at an open beach: 
Insights from phase-resolving numerical computations and {L}agrangian measurements},
Coastal Engineering {\bf 193} (2024) p.104591.



\bibitem{Brezis_Gallouet}
H. Brezis, T. Gallouet.
Nonlinear Schrödinger evolution equations.
Nonlinear Analysis: Theory, Methods \& Applications;
Volume 4, Issue 4, 1980, Pages 677-681
https://doi.org/10.1016/0362-546X(80)90068-1


\bibitem{Brezis_Wainger}
H. Brezis, S. Wainger.
A note on limiting cases of sobolev embeddings
and convolution inequalities.
Communications in Partial Differential Equations,
Volume 5, 1980 - Issue 7
https://doi.org/10.1080/03605308008820154


\bibitem{Clamond_Dutykh2013}
D. Clamond, D. Dutykh,
{\em Fast accurate computation of the fully nonlinear
solitary surface gravity waves},
Comput. \& Fluids 84 (2013) 35–38.


\bibitem{Dutykh_code}
D. Clamond, D. Dutykh, 2012.
http://www.mathworks.com/matlabcentral/fileexchange/
39189-solitary-water-wave.

\bibitem{Courant_Friedrichs1999} R. Courant and K.O. Friedrichs
Supersonic flow and shock waves (Vol. 21). Springer Science \& Business Media, 1999.

\bibitem{Dinvay}
E. Dinvay,
{\em On well-posedness of a dispersive system of
the Whitham--Boussinesq type},
Applied Mathematics Letters,
Volume 88, February 2019, Pages 13-20.
%https://doi.org/10.1016/j.aml.2018.08.005




\bibitem{Dinvay_Dutykh_Kalisch}
Dinvay, E., Dutykh, D., Kalisch, H.
{\em A comparative study of bi-directional Whitham systems}.
Applied Numerical Mathematics.


\bibitem{Dinvay_Tesfahun} E. Dinvay, S. Selberg and A. Tesfahun,
{\em Well-Posedness for a Dispersive System of the Whitham--Boussinesq Type}, 
SIAM Journal on Mathematical Analysis {\bf 52} (2020), 2353--2382.
%(3), pp.2353-2382.





\bibitem{Evans} L.C. Evans, Partial differential equations.  First edition. 
Graduate Studies in Mathematics, 19
(American Mathematical Society, Providence, RI, 1998).



\bibitem{Favre} H. Favre, 
       {\em Ondes de Translation},
        (Dunod, Paris, 1935).


\bibitem{Israwi2011}
S. Israwi, 
{\em Large time existence for 1D Green-Naghdi equations}, 
Nonlinear Analysis {\bf bf 74} (2011), 81--93.

\bibitem{Kalisch_Lagona_Roeber2024} H. Kalisch, F. Lagona and V. Roeber,
{\em Sudden wave flooding on steep rock shores: a clear but hidden danger}, Natural Hazards {\bf 120} (2024), 3105--3125.
%(3), pp.3105-3125.

\bibitem{Lannes} D. Lannes, The Water Wave Problem.
                 Mathematical Surveys and Monographs, vol. {\bf 188}
                (Amer. Math. Soc., Providence, 2013).


\bibitem{LeMehaute} B. LeMehaute,
An introduction to hydrodynamics and water waves,
(Springer Verlag, Berlin, 1976).


\bibitem{LeVeque} R.J. LeVeque, 
Finite volume methods for hyperbolic problems. Cambridge Texts in Applied Mathematics
(Cambridge University Press, Cambridge, 2002).

\bibitem{Moldabayev_Dutykh}
Moldabayev, D., Kalisch, H. and Dutykh, D. 
{\em The Whitham Equation as a model for surface water waves},
Phys. D  {\bf 309} (2015), 99--107.

\bibitem{Roeber2010} V. Roeber, K.F. Cheung and M.H. Kobayashi,
{\em Shock-capturing Boussinesq-type model for nearshore wave processes},
Coastal Engineering {\bf 57} (2010), 407--433.


\bibitem{Stoker} J.~J. Stoker,
         {\em Water Waves: The Mathematical Theory with Applications}, 
          Pure and Applied Mathematics, Vol. IV. 
          (Interscience Publishers, New York, 1957).




\bibitem{Toro} E.F. Toro, 
Riemann solvers and numerical methods for fluid dynamics. A practical introduction. Third edition
(Springer Verlag, Berlin, 2009).



\bibitem{Ursell1953} F. Ursell
{\em The long-wave paradox in the theory of gravity waves},
Math. Proc. Cambridge Phil. Soc. {\bf 49} (1953), 685--694.
%Issue 4


\bibitem{Whitham} G. B. Whitham, 
       {\em Linear and Nonlinear Waves},
        (Wiley, New York, 1974).



\end{thebibliography}
%%%%%%%%%%%%%%%%%%%%%%%%%%%%%%%%%%%%%%%%%%%%%%%%%%%%%%%%%%%%%%%%%%%%%%%%%%%%%
%%%%%%%%%%%%%%%%%%%%%%%%%%%%%%%%%%%%%%%%%%%%%%%%%%%%%%%%%%%%%%%%%%%%%%%%%%%%%
%%%%%%%%%%%%%%%%%%%%%%%%%%%%%%%%%%%%%%%%%%%%%%%%%%%%%%%%%%%%%%%%%%%%%%%%%%%%%
\begin{comment}

\end{comment}

\end{document}